\newtheorem{corollary}{Corollary}
\newtheorem{proposition}{Proposition}
\newtheorem{lemma}{Lemma}
\newtheorem{remark}{Remark}
\begin{document}
\title{ Two Strange Constructions in the Euclidean Plane } 
%% 
%%%%%%
\author{VOLKER  THÜREY  }    %%  \\  Rheinstr. 91  \\   28199 Bremen,  Germany     
                             %%  \thanks{T: 49 (0)421/591777 \  \   E-Mail: volker@thuerey.de }   }
   \maketitle
  %%%%%%% 30th June 2006
  % \centerline{ Subj-class: \quad  \quad MSC-class: 51N20  ( überarbeitete Fassung, siehe geometrypaper.tex)  } 
  \centerline{ MSC-class: 51N20 }      
  \begin{abstract}
     \centerline{Abstract}     
  %%%%%%%%%%%%%%%%%%%%%%%%%%%%%%
     We present two   %   possibly
     new constructions in the usual euclidean plane. 
     We  only  deal with 'Grecian Geometry', with this phrase we mean
	   elementary  geometry in the two-dimensional space $ \mathbbm{R}^{2}$ . 
	   We  describe and prove two propositions about  'projections'.  The proofs need only 
	   elementary analytical knowledge.
     \end{abstract}   
	 %%%%%%%%%%%%%%%%%%% 
	%%%%%%%%%%%%%%%%%%%%
	 The reader may find the foundations and  assumptions of the  following propositiones in many books about 
	 plane geometry, for instance in  \cite{Goetz/Schweizer/Franke/Schoenwald}, p.1-29 . Or you can look in 
	  \cite{Hartshorne} ,    \cite{Hall/Szabo} ,    \cite{Kinder/Spengler} ,     \cite{Filler} , 
    \cite{Coxeter}, \cite{Roe} . \ See  also  \cite{Backman/Cromie}, p.224-234 . 
    %%%%%%%%%%%%%%%%%%%%%%%%%%%%%%%%%%%%%%%%%%%%%%%%%%%%%%%%%%%%%%%%%%%%%%%%%%%%%%%%%%%%%%%%%%%%% 
    %  \centerline{Introduction}   
   \begin{proposition}   \rm   \label{proposition Euclid1} 
    Let us take \ $ \mathbbm{R}^{2} = \{ ({\tt x}|{\tt y}) | \ {\tt x},{\tt y} \in \mathbbm{R} \} $ ,
     \ the two-dimensional  euclidean plane, with the  horizontal {\tt x}-axis and  the vertical  {\tt y}-axis.
     Assume two parallel straight lines \ $  G_S \ { \rm and} \ G_T $. 
     Assume a third     line \ $ L $ , \ not parallel to \ $  G_S, G_T $, respectively, with the property 
     that  \ $ L $ \ does not meet the origin $(0|0)$.  
     The  intersection of \ $ L $ \ with \ $ G_S $ \ is called \ $ S = ({\tt x}_S|{\tt y}_S) $, \ \ 
     and the intersection of \ $ L $ \ with \ $ G_T $ \ is called \  $ T = ({\tt x}_T|{\tt y}_T) $ . \
    Note that, in the case that \ $G_S, G_T $ \ are distinct, the three points \ $(0|0)$,$S,T$ \ are not collinear. \       We can draw  two lines \ $ Z_S$ and $Z_T $, \ \ $ Z_S $ \ connects 
     the origin $(0|0)$ and $S$ , \ and \  $ Z_T $ \ connects $(0|0)$ and $T$ . \ \ $ Z_S$ \ and \ $Z_T $ \ 
      are distinct if   \ $G_S$ \ and \ $ G_T $ \ are distinct. \ \ Now we distinguish two cases
     { \sf (A) } and  { \sf (B) }, but note that they overlap. 
     \\   { \sf (A)}: \
      In the case that \ $  G_S \ { \rm and} \ G_T $ \  are not parallel to the horizontal {\tt x}-axis,  \ 
     %  $  a_L, b_L \in \mathbbm{R} , \ a_L \neq m  \ ,  \  b_L \neq 0 $ . \ \quad  
      we have two intersections \ $ a_S, a_T $ \ of \ $ G_S $ \ and \ $ G_T $, respectively, with the {\tt x}-axis.
    Then there is an unique point \ \  $  P_{hor} = ({\tt x}_{hor}|{\tt y}_{hor})$  on  \ $L$  , \   
        such that \ \  $ ({\tt x}_{hor}-a_S | {\tt y}_{hor}) \in Z_T , \ { \rm  and }  \
     ({\tt x}_{hor}-a_T | {\tt y}_{hor}) \in Z_S $. \\
        { \sf (B)}: \
      In the case that \ $  G_S \ { \rm and} \ G_T $ \  are not parallel to the vertical {\tt y}-axis,  \ 
      we have two intersections \ $ b_S, b_T $ \ of \ $ G_S $ \ and \ $ G_T $, respectively, with the {\tt y}-axis.
     Then there is an unique point \ \    $  P_{ver} = ({\tt x}_{ver}|{\tt y}_{ver})$  on  \ $L$  , \   
        such that \ \  $ ({\tt x}_{ver} | {\tt y}_{ver}-b_S) \in Z_T , \ { \rm  and }  \
     ({\tt x}_{ver} | {\tt y}_{ver}-b_T ) \in Z_S $ . \\
    \end{proposition}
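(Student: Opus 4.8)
The plan is to reduce everything to elementary linear algebra in the two coordinates of the wanted point, and to exploit the following observation: although $P_{hor}$ (resp.\ $P_{ver}$) is asked to satisfy \emph{three} linear conditions --- to lie on $L$, plus the two prescribed incidence conditions --- these three conditions are not independent, because the two incidence conditions already force the point to lie on $L$. So the a priori overdetermined problem is in fact a determined $2\times 2$ system.

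I begin with case \textsf{(A)}. Since $G_S$ and $G_T$ are parallel and, in case \textsf{(A)}, not parallel to the {\tt x}-axis, they share a direction vector $(p|q)$ with $q\neq 0$; put $k:=p/q$. Because $L$ avoids the origin, neither $S$ nor $T$ equals $(0|0)$, so $Z_S$ is the line $\{({\tt u}|{\tt v}):{\tt y}_S{\tt u}={\tt x}_S{\tt v}\}$ and $Z_T$ the line $\{({\tt u}|{\tt v}):{\tt y}_T{\tt u}={\tt x}_T{\tt v}\}$. Intersecting $G_S$ (through $S$ with direction $(p|q)$) and $G_T$ (through $T$ with the same direction) with the {\tt x}-axis yields $a_S={\tt x}_S-k\,{\tt y}_S$ and $a_T={\tt x}_T-k\,{\tt y}_T$. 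Writing $({\tt x}|{\tt y})$ for the sought point, the requirement $({\tt x}-a_S|{\tt y})\in Z_T$ becomes the linear equation
\[
  {\tt y}_T\,{\tt x}-{\tt x}_T\,{\tt y} \;=\; {\tt y}_T\,a_S \;=\; {\tt x}_S{\tt y}_T-k\,{\tt y}_S{\tt y}_T ,
\]
and the requirement $({\tt x}-a_T|{\tt y})\in Z_S$ becomes
\[
  {\tt y}_S\,{\tt x}-{\tt x}_S\,{\tt y} \;=\; {\tt y}_S\,a_T \;=\; {\tt x}_T{\tt y}_S-k\,{\tt y}_S{\tt y}_T .
\]

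The decisive step is to subtract the second equation from the first: the term $k\,{\tt y}_S{\tt y}_T$ cancels, leaving $({\tt y}_T-{\tt y}_S)\,{\tt x}-({\tt x}_T-{\tt x}_S)\,{\tt y}={\tt x}_S{\tt y}_T-{\tt x}_T{\tt y}_S$, which is precisely the equation of the line through $S$ and $T$, i.e.\ of $L$. Hence every common solution of the two displayed equations lies on $L$ automatically, so the condition $P_{hor}\in L$ is redundant, and it remains only to show that the $2\times 2$ system formed by those two equations has a unique solution. Its coefficient determinant equals ${\tt x}_T{\tt y}_S-{\tt x}_S{\tt y}_T$, which vanishes exactly when $(0|0),S,T$ are collinear; by the observation recorded in the statement this does not happen when $G_S\neq G_T$, so the system is uniquely solvable, yielding both the existence and the uniqueness of $P_{hor}$. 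The degenerate subcase $G_S=G_T$ must be treated separately: then $S=T$, hence $a_S=a_T$ and $Z_S=Z_T$, the two incidence conditions coincide, and $P_{hor}$ is the unique point of $L\cap(Z_S+(a_S|0))$ --- a single point, since $L$ is not parallel to $Z_S$ (were it parallel, $L$ would pass through the origin, contrary to hypothesis).

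Case \textsf{(B)} follows from case \textsf{(A)} word for word after interchanging the {\tt x}- and {\tt y}-axes (equivalently, after reflecting the configuration in the line ${\tt y}={\tt x}$): the hypothesis that $G_S,G_T$ are not parallel to the {\tt y}-axis is what makes $b_S$ and $b_T$ well defined, and the same cancellation shows that the two incidence conditions already pin the point down to $L$. The only genuine obstacle I anticipate is bookkeeping --- keeping track of the degenerate possibility $G_S=G_T$ and verifying that the relevant $2\times 2$ determinant is nonzero; the ``miracle'' making the a priori overdetermined system consistent is nothing but the cancellation of the term $k\,{\tt y}_S{\tt y}_T$ upon subtracting the second equation from the first.
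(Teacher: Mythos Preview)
Your argument is correct and is organized somewhat differently from the paper's. The paper parametrizes $L$ as $({\tt x}_S|{\tt y}_S)+t(w_1|w_2)$, introduces auxiliary scalars $\alpha,\beta$ for the points on $Z_T,Z_S$, and then checks by direct computation that the value $\varrho_{[1]}$ of $t$ obtained from the $Z_T$-condition equals the value $\varrho_{[2]}$ obtained from the $Z_S$-condition; uniqueness is argued separately via a monotonicity remark. You instead eliminate the auxiliary unknowns from the outset, writing the two incidence conditions as a $2\times2$ linear system in the coordinates $({\tt x}|{\tt y})$ themselves, and you observe that subtracting the two equations literally reproduces the equation of $L$. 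This makes the consistency of the a~priori overdetermined problem completely transparent (the ``miracle'' is just the cancellation of $k\,{\tt y}_S{\tt y}_T$), and it gives existence and uniqueness in one stroke from the nonvanishing of ${\tt x}_T{\tt y}_S-{\tt x}_S{\tt y}_T$, which is exactly the noncollinearity of $(0|0),S,T$ recorded in the statement. Your separate treatment of the degenerate case $G_S=G_T$ and the reduction of case~\textsf{(B)} to case~\textsf{(A)} by the reflection ${\tt x}\leftrightarrow{\tt y}$ are both fine. The two approaches are equivalent in substance; yours is a little shorter and makes the role of the noncollinearity hypothesis more visible, while the paper's parametrized version has the advantage of yielding the explicit formula $P_{hor}=({\tt x}_S|{\tt y}_S)+\varrho\,(w_1|w_2)$ directly, which the paper then exploits for the closed-form expressions listed after the proof.
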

      \quad  \\     
      Before reading the proof of the proposition you should take a look on Picture 1 .   
       \\  {  } \\    \\  \\  \\   
    %%%%%%%%%%%%%%%%%%%%%%%%%%%%%%%%%%%      
	   %%%%%%%%%%%%%  \input {anglepic1}  
	 %%%%%%%%%%%%%%%%%%%%%%%%%%%%%%%%%%%%%%%  
	   \setlength{\parindent}{0mm}
    \begin{figure}[ht]
    \centering
     \setlength{\unitlength}{1cm}
    \begin{picture}(4,6)     \thinlines      \put(6.0,-2){Picture 1} 
      \put(6.15,-0,1){$   {\tt x}   $}     \put(-0.3,5.8){$  {\tt y} $}    
       \put(-6.0,0){\vector(1,0){12.0}}     \put(0,-3){\vector(0,1){9}}  
       \put(-2.3,-0.34){$a_S$}     \put(0.95,3.6){$G_T: {\tt y}=2{\tt x}+2$}  
           \put(0.95,5.6){$G_S: {\tt y}=2{\tt x}+4$}   \put(4.95,1.1){$L: {\tt y}= +1$} 
       \put(-1.3,-0,35){ $ a_T $}  
       \put(-3.0,-2.0){\line(1,2){4.0}}  \put(-2.0,-2.0){\line(1,2){3.0}}   \put(-4.0,1.0){\line(1,0){9.0}} 
        % \put(3.65,1.53){  $ \times $ }    \put(3.65,0.8){  $ \times $ }  
            \put(-1.7,1.2){$S$}     \put(-0.6,1.2){$T$}    \put(-2.0,4.0){\line(1,-2){3.9}}  
                                                           \put(-4.5,3.0){\line(3,-2){8.5}} 
          \put(-5.8,3.1){$Z_S: {\tt y}= -\frac{2}{3}{\tt x}$}    
         \put(-3.3,4.1){$Z_T:{\tt y}= -2{\tt x} $}  \put(-2.65,0.9){$\times$ }   \put(1.35,0.9){$\times$ } 
          \put(-2.65,1.2){$P_{hor}$ }    \put(1.35,1.2){$P_{ver}$ }  \put(0.2,2.0){$b_T$}    \put(0.2,4.0){$b_S$}  
      \put(4.5,6.0){ Here: }     
      \put(4.5,5.3){ $b_S = 4 $ \  and \ $b_T = 2 $ , } 
      \put(4.5,4.6){ $a_S = -2 $ \  and \ $a_T = -1 $ , }         
      \put(4.5,3.9){ $S=(-\frac{3}{2}|1) $ \  and \ $T=(-\frac{1}{2}|1) $ . }  
      \put(4.5,3.2){Hence \ $P_{hor}=(-\frac{5}{2}|1) $ \ and \ $P_{ver}=(\frac{3}{2}|1) $ . }  
      \linethickness{0.5mm} \put(-2.5,1.0){\line(1,0){2.0}}    \put(+1.5,1.0){\line(0,-1){4.0}}        
      \end{picture}
    \end{figure}	   
	      \\ \\   \\  {  }  \\      \\   \\  \\  {  }  %%  \\  \\     \\  {  }  \\  \\
	     %%   \newpage 
	   \begin{proof} 
	   Note that, if \ $G_S = G_T $ , the proposition is trivial. Hence we  assume   that 
	                              \ $G_S, G_T $ \ are distinct. \ \
	   We describe the parallel straight lines \ $G_S , G_T $ with equations 
	   $$ G_S \ := \ \{ ({\tt x}|{\tt y}) \in \mathbbm{R}^{2}\ | 
	                         \ e \cdot {\tt y} =  m \cdot {\tt x} + b_S \} \ \ \ { \rm and } \ \ \ 
	      G_T \ := \ \{ ({\tt x}|{\tt y}) \in \mathbbm{R}^{2}\\ | 
	                         \  e \cdot {\tt y} = m \cdot {\tt x} + b_T \} \ , $$
	   with \ \ $ e, m, b_S, b_T \in  \mathbbm{R}$ , \ $ ( m,e ) \neq (0,0) $ .  \
	   Without less of generality let either be \  \ 
	         ( $ e = 0 $ \ and \ $ m = 1$ ) \ or \ ( $ e = 1 $ ) .  \\
	 The straight line \ $L$ \ can be described  with two numbers \ $w_1, w_2 \in  \mathbbm{R}, \ (w_1,w_2) \neq (0,0)$ . 
	    $$ L \ := \  \{ \ ({\tt x}_S|{\tt y}_S ) + t \cdot ( w_1|w_2 ) \ | \ t \in  \mathbbm{R} \ \}     
	            = \  \{ \ ({\tt x}_T|{\tt y}_T ) + t \cdot ( w_1|w_2 ) \ | \ t \in  \mathbbm{R} \ \} \ , $$ 
	    with \ \
	   $ w_1 \cdot {\tt y}_T  \neq w_2 \cdot {\tt x}_T$ , \ and \ $ w_1 \cdot {\tt y}_S  \neq w_2 \cdot {\tt x}_S  $ ,
	  % $ {\tt y}_T \cdot w_1  \neq  {\tt x}_T \cdot w_2$ , \ and \ $ {\tt y}_S \cdot w_1 \neq {\tt x}_S \cdot w_2  $ ,
	    \ ( because $ (0|0) \notin L $ ) , \ and with \
	   $ e \cdot w_2 \neq m \cdot w_1 $, ( because $ L $ is not parallel  to $ G_S$ and $ G_T$ , respectively) .              \begin{lemma}  \label{Lemma 1}  \rm
	   In the case of   { \sf (A)}, (that means that \ $  G_S \ { \rm and} \ G_T $ \  are not parallel to the 
	   horizontal {\tt x}-axis), we have  \ $ m \neq 0 $ , \ and \ $ a_S = -b_S/m \ , \ a_T = -b_T/m $ .      
	   Then there are uniquely three numbers \ $ \varrho, \alpha, \beta \in  \mathbbm{R}$ \ which solve the
	   system of four linear equations 
	   $$    (1) \quad {\tt x}_S + \varrho \cdot w_1 - a_S = \alpha \cdot {\tt x}_T \quad ,  \qquad                                                (2) \quad {\tt y}_S + \varrho \cdot w_2   = \alpha \cdot {\tt y}_T \quad ,     $$
	   $$    (3) \quad {\tt x}_S + \varrho \cdot w_1 - a_T = \beta \cdot {\tt x}_S \quad ,  \qquad                                                 (4) \quad {\tt y}_S + \varrho \cdot w_2   = \beta \cdot {\tt y}_S \ \quad .   $$
	    In the case of   { \sf (B)}, (that means that \ $  G_S \ { \rm and} \ G_T $ \  are not parallel to the 
	    vertical {\tt y}-axis),  %% , we have \ \ $ e = 1 $ .  \    
	    there are uniquely three numbers \ $  \widetilde{\varrho},  \widetilde{\alpha}, 
	      \widetilde{\beta} \in  \mathbbm{R}$ \ which solve the   system of four equations 
	   $$    \widetilde{(1)} \quad 
	   {\tt y}_S + \widetilde{\varrho} \cdot w_2 - b_S = \widetilde{\alpha} \cdot {\tt y}_T \quad , \qquad  
	 \widetilde{(2)} \quad {\tt x}_S + \widetilde{\varrho} \cdot w_1   = \widetilde{\alpha} \cdot {\tt x}_T \quad ,  $$
	   $$    \widetilde{(3)} \quad 
	         {\tt y}_S + \widetilde{\varrho} \cdot w_2 - b_T = \widetilde{\beta} \cdot {\tt y}_S \quad , \qquad               \widetilde{(4)} \quad {\tt x}_S + \widetilde{\varrho} \cdot w_1  = \widetilde{\beta} \cdot {\tt x}_S \quad . $$
	   \end{lemma}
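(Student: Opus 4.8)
The plan is to treat $(1)$--$(4)$ as a system of four linear equations in the three unknowns $\varrho,\alpha,\beta$, to show that it is consistent with exactly one solution, and then to note that case {\sf (B)} follows by the same argument with the two coordinate axes interchanged. Geometrically the equations say that the point $P:=({\tt x}_S+\varrho w_1\,|\,{\tt y}_S+\varrho w_2)$, which runs through $L$ as $\varrho$ ranges over $\mathbbm{R}$, satisfies $P-(a_S|0)=\alpha\cdot({\tt x}_T|{\tt y}_T)$ and $P-(a_T|0)=\beta\cdot({\tt x}_S|{\tt y}_S)$. First I would eliminate $\alpha$ and $\beta$: multiplying $(1)$ by ${\tt y}_T$, $(2)$ by ${\tt x}_T$ and subtracting gives $({\tt x}_S+\varrho w_1-a_S){\tt y}_T=({\tt y}_S+\varrho w_2){\tt x}_T$, and multiplying $(3)$ by ${\tt y}_S$, $(4)$ by ${\tt x}_S$ and subtracting gives $({\tt x}_S+\varrho w_1-a_T){\tt y}_S=({\tt y}_S+\varrho w_2){\tt x}_S$. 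Expanding and cancelling ${\tt x}_S{\tt y}_S$ turns the second of these into the single linear equation $\varrho\,(w_1{\tt y}_S-w_2{\tt x}_S)=a_T{\tt y}_S$ for $\varrho$; expanding the first gives $\varrho\,(w_1{\tt y}_T-w_2{\tt x}_T)=a_S{\tt y}_T-({\tt x}_S{\tt y}_T-{\tt y}_S{\tt x}_T)$.

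The heart of the matter is to check that these two equations for $\varrho$ coincide. Their left-hand coefficients agree because $S$ and $T$ both lie on $L$, so $T-S$ is a direction vector of $L$ and hence proportional to $(w_1|w_2)$; this yields $w_1({\tt y}_T-{\tt y}_S)=w_2({\tt x}_T-{\tt x}_S)$, i.e.\ $w_1{\tt y}_T-w_2{\tt x}_T=w_1{\tt y}_S-w_2{\tt x}_S$. Their right-hand sides agree because $S\in G_S$, $T\in G_T$, together with $m\neq0$ and $a_S=-b_S/m$, $a_T=-b_T/m$ (valid in case {\sf (A)}), give $m({\tt x}_S-a_S)=e{\tt y}_S$ and $m({\tt x}_T-a_T)=e{\tt y}_T$; multiplying the former by ${\tt y}_T$, the latter by ${\tt y}_S$ and dividing by $m$ gives $({\tt x}_S-a_S){\tt y}_T=({\tt x}_T-a_T){\tt y}_S$, which rearranges into $a_S{\tt y}_T-({\tt x}_S{\tt y}_T-{\tt y}_S{\tt x}_T)=a_T{\tt y}_S$. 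Thus both equations read $\varrho\,(w_1{\tt y}_S-w_2{\tt x}_S)=a_T{\tt y}_S$, and since $(0|0)\notin L$ forces $w_1{\tt y}_S\neq w_2{\tt x}_S$, this determines $\varrho$ uniquely.

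With $\varrho$ fixed, I would recover $\alpha$ and $\beta$ from linear dependence. The equation $({\tt x}_S+\varrho w_1-a_S){\tt y}_T=({\tt y}_S+\varrho w_2){\tt x}_T$ says exactly that $P-(a_S|0)$ and $({\tt x}_T|{\tt y}_T)$ are linearly dependent; since $({\tt x}_T|{\tt y}_T)=T\neq(0|0)$ (because $T\in L$ and $(0|0)\notin L$), there is a unique $\alpha\in\mathbbm{R}$ with $P-(a_S|0)=\alpha\cdot({\tt x}_T|{\tt y}_T)$, i.e.\ satisfying both $(1)$ and $(2)$. Likewise, since $S\neq(0|0)$, there is a unique $\beta$ satisfying $(3)$ and $(4)$. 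Uniqueness of the triple $(\varrho,\alpha,\beta)$ is then immediate: any solution of $(1)$--$(4)$ satisfies the two eliminated equations, hence has the above $\varrho$, whence $\alpha$ and $\beta$ are forced. Case {\sf (B)} is the mirror image: there $e\neq0$ (a line parallel to the ${\tt y}$-axis would have $e=0$), so one may take $e=1$ and then $b_S,b_T$ are the ${\tt y}$-intercepts; eliminating $\widetilde{\alpha},\widetilde{\beta}$ in the same fashion, the two resulting equations for $\widetilde{\varrho}$ coincide because of $T-S\parallel(w_1|w_2)$ and the identity $({\tt y}_S-b_S){\tt x}_T=({\tt y}_T-b_T){\tt x}_S$ (which follows from ${\tt y}_S-b_S=m{\tt x}_S$, ${\tt y}_T-b_T=m{\tt x}_T$), and they reduce to $\widetilde{\varrho}\,(w_2{\tt x}_S-w_1{\tt y}_S)=b_T{\tt x}_S$, again solvable uniquely since $w_1{\tt y}_S\neq w_2{\tt x}_S$.

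I expect the one genuine obstacle to be exactly the coincidence of the two equations for $\varrho$ in the second paragraph: this is the only place where the hypotheses $G_S\parallel G_T$ and $S,T\in L$ are really used, and once the two short identities $w_1{\tt y}_T-w_2{\tt x}_T=w_1{\tt y}_S-w_2{\tt x}_S$ and $({\tt x}_S-a_S){\tt y}_T=({\tt x}_T-a_T){\tt y}_S$ are in hand, everything else is routine bookkeeping.
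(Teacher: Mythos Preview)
Your proposal is correct and follows essentially the same route as the paper: eliminate $\alpha$ from $(1),(2)$ and $\beta$ from $(3),(4)$ to obtain two linear conditions on $\varrho$, then show these coincide via the two identities $w_1{\tt y}_T-w_2{\tt x}_T=w_1{\tt y}_S-w_2{\tt x}_S$ (from $S,T\in L$) and ${\tt y}_S{\tt x}_T-{\tt x}_S{\tt y}_T+a_S{\tt y}_T={\tt y}_S a_T$ (from $S\in G_S$, $T\in G_T$), with case {\sf (B)} handled symmetrically. You are a bit more explicit than the paper in justifying the unique recovery of $\alpha$ and $\beta$ once $\varrho$ is fixed, but the argument is the same in substance.
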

	 \begin{proof} \rm  %% of Lemma \ref{Lemma 1}, 
	 In case { \sf (A)} \  the two equations \ (1),(2) \ \ yield \ $ \varrho_{[1]} $ , \ 
	 and the two equations \ \ (3),(4) \ \ yield \ $ \varrho_{[2]} $ ,  
	 $$  \varrho_{[1]} = \frac{{\tt y}_S \cdot {\tt x}_T - {\tt x}_S \cdot {\tt y}_T + a_s \cdot {\tt y}_T}
	  { w_1 \cdot {\tt y}_T - w_2 \cdot {\tt x}_T }   \quad \ \  { \rm and } \quad  \ \ 
	           \varrho_{[2]} = \frac{{\tt y}_S \cdot a_T}{ w_1 \cdot {\tt y}_S - w_2 \cdot {\tt x}_S } \ . $$  
	  Because of \  $ ({\tt x}_S|{\tt y}_S), ({\tt x}_T|{\tt y}_T) \in L $, \ \ there is a 
	 $ \breve{t} \in \mathbbm{R}$ \ such that \ 
	 $ ({\tt x}_T|{\tt y}_T) = ({\tt x}_S|{\tt y}_S) + \breve{t} \cdot (w_1,w_2) $, hence \ \ 
	  $ { w_1 \cdot {\tt y}_T - w_2 \cdot {\tt x}_T } = { w_1 \cdot {\tt y}_S - w_2 \cdot {\tt x}_S } $ .
	And with \ $ ({\tt x}_S|{\tt y}_S) \in G_S \ , \   ({\tt x}_T|{\tt y}_T) \in G_T $ \  follows easily that \ \
	\ $ {{\tt y}_S \cdot {\tt x}_T - {\tt x}_S \cdot {\tt y}_T + a_s \cdot {\tt y}_T} = {{\tt y}_S \cdot a_T} $ \ ,  \
	\ hence \ \ \quad   $   \varrho_{[1]} = \varrho_{[2]} =: \varrho $ .  \\ 
	 In the case of { \sf (B)} \  the two equations \ $ \widetilde{(1)}\widetilde{(2)}$  \ \ yield \ 
	 $  \widetilde{\varrho}_{[1]} $ , \ \ and  \ \ $ \widetilde{(3)}\widetilde{(4)} $ \ \ yield \ 
	 $  \widetilde{\varrho}_{[2]} $ , 
  $$ \widetilde{\varrho}_{[1]}  = \frac{{\tt y}_T \cdot {\tt x}_S - {\tt x}_T \cdot {\tt y}_S + b_s \cdot {\tt x}_T}
	  { w_2 \cdot {\tt x}_T - w_1 \cdot {\tt y}_T }   \quad \ \  { \rm and } \quad  \ \ 
	      \widetilde{\varrho}_{[2]}  = \frac{{\tt x}_S \cdot b_T}{ w_2 \cdot {\tt x}_S - w_1 \cdot {\tt y}_S } \ . $$  
	 and  with similar steps as only just follows \ \quad 
	                      $  \widetilde{\varrho}_{[1]} = \widetilde{\varrho}_{[2]} =: \widetilde{\varrho} $ ,
	   and the lemma is proved.   
	  \end{proof}  
	 %%%%%%%%%%%%  
	 To finish the proof of proposition \ref{proposition Euclid1}   we set in the \\ case { \sf (A)}:  \quad \ 
	  $ P_{hor} \ = \ ({\tt x}_{hor}|{\tt y}_{hor}) \ := \ ({\tt x}_S|{\tt y}_S) + \varrho \cdot ( w_1|w_2) $ ,  \qquad 
	              and in the     \\  
	 	    case { \sf (B)}:  \quad \
	 $  P_{ver} \ = \ ({\tt x}_{ver}|{\tt y}_{ver}) \ := \ ({\tt x}_S|{\tt y}_S) + \widetilde{\varrho} \cdot ( w_1|w_2) $ . \\
	  The uniqueness of \ $  P_{hor} $ \ and \ $  P_{ver} $ \ is trivial, for instance, for a non vertical \ $L$, 
	   the horizontal distance (with signs) from a point on \ $ L $  \ to $ Z_S$ or $Z_T$ , respectively, 
   $$  {\tt x} \longmapsto { \rm the \ horizontal \ distance \ (with \ sign) \ from \ a \ point} \ \
    ({\tt x}|{\tt y}) \ \ { \rm on } \ L   \ { \rm to } \ Z_S  \quad { \rm and }    $$ 
   $$  {\tt x} \longmapsto { \rm the \ horizontal \ distance \ (with \ sign) \ from \ a \ point} \ \
    ({\tt x}|{\tt y}) \ \ { \rm on } \ L   \ { \rm to } \ Z_T \ , \quad  { \rm respectively } ,   $$   
   are  strictly  monotone functions.  This was the last what we had to do to prove the 
   proposition.          % \ref{proposition Euclid1} . 
	 
	   \end{proof}  
	  For completeness, we write down other representations of \ $  P_{hor} $ \ and \ $  P_{ver}$ , respectively. \\
	  Note that if  \  $ G_S , G_T $ \ are not parallel to the  vertical {\tt y}-axis, they have equations 
	    $$ G_S \ = \ \{ ({\tt x}|{\tt y}) \in \mathbbm{R}^{2}\ | 
	                         \ {\tt y} =  m \cdot {\tt x} + b_S \} \ \ \ { \rm and } \ \ \ 
	      G_T \ = \ \{ ({\tt x}|{\tt y}) \in \mathbbm{R}^{2} \ | 
	                         \ {\tt y} = m \cdot {\tt x} + b_T \} \ , $$ 
	    and  if  \  $ G_S , G_T $ \ are  vertical, they have equations   
	  $$ G_S \ = \ \{ ({\tt x}|{\tt y}) \in \mathbbm{R}^{2} \ | \ {\tt x} =  - b_S =: a_S \} \ \ \ { \rm and } \ \ \  
	     G_T \ = \ \{ ({\tt x}|{\tt y}) \in \mathbbm{R}^{2} \ | \ {\tt x} =  - b_T =: a_T \} \ . $$ 
	   If \ $L$ \ is not vertical, we have \ $ w_1 \neq 0 $ , and 
	    $$ L \ = \  \{ \ ({\tt x}_S|{\tt y}_S ) + t \cdot ( w_1|w_2 ) \ | \ t \in  \mathbbm{R} \ \}                               \ \  = \ \ \{ ({\tt x}|{\tt y}) \in \mathbbm{R}^{2}\ | \ {\tt y} =  m_L \cdot {\tt x} + b_L \} \ , $$
	    with \ \ $ m_L := w_2/w_1$ \ \ and \ \ $ b_L := {\tt y}_S  - {\tt x}_S \cdot m_L $ , \  
	                           \  ( $ b_L \neq  0 $ , \ because \ $ (0|0) \notin L $) . 
	    $$ { \rm  If } \ L \ { \rm is \ vertical, \ we \ set } \ \ a_L := {\tt x}_S = {\tt x}_T \ , { \ \rm and \ }
	     \quad   L \ \  = \ \ \{ ({\tt x}|{\tt y}) \in \mathbbm{R}^{2}\ | \ {\tt x} =  a_L  \} \ . $$   \\    
	    %%%%%%%%%%%%
	    Now assume that neither \ $ L $ \ nor \ $ G_S, G_T $ \ are parallel to one of the axes. \ Then   
	  \begin{eqnarray*}   
        P_{hor} \ = \ \left( \ {\tt x}_{hor}|{\tt y}_{hor} \right) & = 
                     & \left( \ \frac{{\tt x}_T \cdot b_L + {\tt y}_T \cdot a_S}{b_L} 
     \ | \  m_L \cdot \frac{{\tt x}_T \cdot b_L + {\tt y}_T \cdot a_S}{b_L} + b_L \ \right)  \quad   \\   
      \qquad  & = &  \left( \ \frac{{\tt x}_S \cdot b_L + {\tt y}_S \cdot a_T}{b_L}
     \ | \   m_L \cdot \frac{{\tt x}_S \cdot b_L + {\tt y}_S \cdot a_T}{b_L} + b_L \ \right) \ \  = \\   
      \end{eqnarray*}     %%   
      $$ \left( \frac{ {b_L}^{2} \cdot m + b_S \cdot b_T \cdot m_L - m \cdot b_L \cdot (b_S + b_T)}
                     {   b_L \cdot m \cdot (m - m_L) }    \ | \ 
     \frac{ {b_L}^{2} \cdot m^{2} + b_S \cdot b_T \cdot {m_L}^{2} -  m \cdot m_L \cdot b_L \cdot (b_S + b_T)  }
              {   b_L \cdot m \cdot (m - m_L) }  \right)  \ ,  $$ 
    \begin{eqnarray*}   
        P_{ver} \ = \ \left( \ {\tt x}_{ver}|{\tt y}_{ver} \right) & = & 
          \left( \ \frac{{\tt x}_T \cdot ( b_L - b_S) }{b_L} \ | \ 
                                            m_L \cdot \frac{{\tt x}_T \cdot ( b_L - b_S) }{b_L} + b_L \ \right) \\   
               & = &  \left( \ \frac{{\tt x}_S \cdot  ( b_L - b_T) }{b_L}
                                       \ | \   m_L \cdot \frac{{\tt x}_S \cdot ( b_L - b_T) }{b_L} + b_L \ \right) \\
               & = &  \left( \ \frac{ (b_L-b_T) \cdot  (b_L - b_S) }{b_L \cdot (m-m_L) }
                     \ | \ \frac{m_L \cdot ( b_S\cdot b_T -  b_L\cdot b_S -  b_L\cdot b_T ) + b_L^{\ 2}\cdot m}
                                                          {b_L \cdot (m-m_L) } \ \right) \ \ .                          
    \end{eqnarray*} 
   Now assume that \ $ G_S, G_T $ are not parallel to one of the axes, and $ L $ is horizontal.    
     ( See the previous picture, too.) \ \ Then we have an equation \ \ $ L: \ {\tt y}  = b_L $ , and  we get 
    \begin{eqnarray*}   
        P_{hor} \ = \ ( \ {\tt x}_{hor}|{\tt y}_{hor}) & = & ( \ {\tt x}_T +  a_S 
     \ | \ b_L \ ) \ = \ ( \ {\tt x}_S + a_T \ | \  b_L \ )  
                           \ = \ \left( \frac{b_L - b_S - b_T}{m} \ | \ b_L \right) \ ,  
    \end{eqnarray*}  
    \begin{eqnarray*}   
        P_{ver} & = & \left( \ \frac{{\tt x}_T \cdot ( b_L - b_S) }{b_L}  \ | \ b_L \ \right) \ = \
                         \left( \ \frac{{\tt x}_S \cdot  ( b_L - b_T) }{b_L} \ | \  b_L \ \right)  \ = \
                         \left( \ \frac{( b_L - b_S) \cdot  ( b_L - b_T) }{b_L \cdot m} \ | \  b_L \ \right)  \ .  
    \end{eqnarray*}    
   
    Assume that \ $ G_S, G_T $ are not parallel to one of the axes, and $ L $ is vertical.    
     \ \ Then we have an equation \ \ $ L: \ {\tt x}  = a_L $ , and  we get  
     \begin{eqnarray*}   
        P_{hor} \ = \ ( \ {\tt x}_{hor}|{\tt y}_{hor}) \ = \
          \left( \ a_L \ | \ m \cdot a_L + b_S + b_T + \frac{b_S \cdot b_T}{a_L \cdot m}  \ \right)  
        \ = \  \left( \ a_L \ | \ ( m + \frac{b_S}{a_L}\ ) \cdot  ( a_L + \frac{b_T}{m} ) \ \right) \ ,
      \end{eqnarray*}  
    $$  P_{ver} \ = \ \left( \ {\tt x}_{ver}|{\tt y}_{ver} \right) \ = \
                                      \left( a_L \ | \ m \cdot a_L + b_T + b_S \right) \ \ . $$  
    Now assume that \ $ G_S, G_T $ are  parallel to the horizontal {\tt x}-axis, and $ L $ is not parallel 
    to the  {\tt y}-axis (and, of course, not parallel to the {\tt x}-axis, too) . Then we get  no \ $  P_{hor} $  ,               and            \begin{eqnarray*}   
      P_{ver} \ = \ \left( \ {\tt x}_{ver}|{\tt y}_{ver} \right)  & = & 
                             \left( \ \frac{{\tt x}_T \cdot ( b_L - b_S) }{b_L} \ | \
                             m_L \cdot \frac{ {\tt x}_T \cdot ( b_L - b_S) }{b_L} + b_L \ \right) \\   
             & = &  \left( \ \frac{{\tt x}_S \cdot  ( b_L - b_T) }{b_L}
                        \ | \   m_L \cdot  \frac{ {\tt x}_S \cdot ( b_L - b_T) }{b_L} + b_L \ \right) \ \   \\
                & = &  \left( \ \frac{ (b_T-b_L) \cdot  (b_L - b_S) }{b_L \cdot m_L }
                     \ | \ \frac{ b_L\cdot b_S + b_L\cdot b_T -  b_S\cdot b_T }
                                                          {b_L } \ \right) \ .                   
    \end{eqnarray*} 
    If we assume that \ $ G_S, G_T $ are  parallel to the horizontal {\tt x}-axis, and $ L $ is parallel 
    to the  {\tt y}-axis, then we get no  \ $  P_{hor} $ , of course, and   
     \begin{eqnarray*}   
        P_{ver} \ = \ \left( \ {\tt x}_{ver} | {\tt y}_{ver} \right) & = & \left( \ a_L \ | \ b_S + b_T \ \right) \ .
     \end{eqnarray*}  
      Now assume that \ $ G_S, G_T $ are  parallel to the vertical {\tt y}-axis, and $ L $ is not parallel 
    to the  {\tt x}-axis (and, of course, not parallel to the {\tt y}-axis, too) .  Then we get  no \ $  P_{ver} $  ,         and    \begin{eqnarray*}   
        P_{hor} \ = \ \left( \ {\tt x}_{hor}|{\tt y}_{hor} \right) & = &  
                          \left( \ \frac{ a_T \cdot b_L + {\tt y}_T \cdot a_S}{b_L} 
                           \ | \  m_L \cdot \frac{ a_T \cdot b_L + {\tt y}_T \cdot a_S}{b_L} + b_L \ \right) \\
                & = &   \left( \ \frac{ a_S \cdot b_L + {\tt y}_S \cdot a_T}{b_L}
                    \ | \   m_L \cdot \frac{ a_S \cdot b_L + {\tt y}_S \cdot a_T}{b_L} + b_L \ \right)  \\                              & = &   \left( \ \frac{ b_L \cdot (a_S+a_T) + m_L \cdot a_S \cdot a_T }{b_L}
                      \ | \   m_L \cdot {\tt x}_{hor} + b_L \ \right)  \  .
    \end{eqnarray*} 
     And finally if we assume that \ $ G_S, G_T $ are  parallel to the  {\tt y}-axis, and $ L $ is parallel 
    to the  {\tt x}-axis, we get 
     \begin{eqnarray*}   
        P_{hor} \ = \  \left( \ {\tt x}_{hor}|{\tt y}_{hor} \right) & = &  \left( \ a_S +  a_T \ | \ b_L \ \right) \ .
     \end{eqnarray*}                  
    %%%%%%%                    
    \begin{remark}    \rm  \quad    Note a few special trivial cases. \\  
   Assume that \ $ G_S, G_T $ are not  parallel to the horizontal {\tt x}-axis  (case{ \sf (A)}) .    \\
   %%   \ $ S = ({\tt x}_S|{\tt y}_S) $  \\
  If \ \ \ $ S = (a_S|0) $ , \ \ \
  then  we have \quad \ $ Z_S =  {\tt x}$-axis \ \ and \ \ $  P_{hor} = S = ( a_S | 0 ) $ .      \\
  If \ \ \ $ T = (a_T|0) $  , \ \ \
  then  we have \quad \ $ Z_T =  {\tt x}$-axis \ \ and \ \ $  P_{hor} = T = ( a_T | 0 ) $ .      \\
    Assume now that \ $ G_S, G_T $ are not  parallel to the vertical {\tt y}-axis  (case{ \sf (B)}) .   \\
  If \ \ \ $ S = (0|b_S) $ ,  \ \ \
  then  we have \quad \ $ Z_S =  {\tt y}$-axis \ \ and \ \ $  P_{ver} = S = ( 0 | b_S ) $ .      \\
  If \ \ \ $ T = (0|b_T) $ , \ \ \
  then  we have \quad \ $ Z_T =  {\tt y}$-axis \ \ and \ \ $  P_{ver} = T = ( 0 | b_T ) $ .
    \end{remark}       
    Now we describe  another proposition which seems  to be more general,  but indeed it is equivalent,
    see lemma  \ref{lemma Zwei}. \  
     Because we proved proposition  \ref{proposition Euclid1} , proposition  \ref{proposition Euclid2} 
     also is true.
    \begin{proposition}   \rm    \label{proposition Euclid2} 
    Let us take \ $ \mathbbm{R}^{2}$ ,
     \ the two-dimensional  euclidean plane. \
     Assume two parallel straight lines \ $  G_S \ { \rm and} \ G_T $. 
     Assume a third     line \ $ L $ , \ not parallel to \ $  G_S$ \ and \ $G_T $ , respectively.
     The  intersection of \ $ L $ \ with \ $ G_S $ \ is called \ $ S $, \ \ 
     and the intersection of \ $ L $ \ with \ $ G_T $ \ is called \  $ T $ . \
     %%  Note that, in the case that \ $G_S, G_T $ \ are distinct, the two points \ $S,T$ \ are distinct. \  
     Assume a fourth line \ $ Axis \ , \ \  Axis \neq L $ , \ and \ $ Axis$ \ is not parallel to \
      $  G_S$ \ and \ $G_T $ .
      The  intersection of \ $ Axis $ \ with \ $ G_S $ \ is called \ $ S_{Axis} $, \ \ 
     and the intersection of \ $ Axis $ \ with \ $ G_T $ \ is called \  $ T_{Axis} $ . \  
      Further we choose a point \ $ Origin $ \ on  \ $ Axis \backslash L $ . \ 
     We can draw  two  straight lines \ $ Z_S$  and  $Z_T $ , \ \ $ Z_S $ \ connects 
     \ $ Origin $ \  and \ $S$ , \ and \  $ Z_T $ \ connects \ $ Origin $ \  and $T$ .  \ 
      As every line, \ $ Z_S$ \ and \ $Z_T $ , respectively, \ divide the plane in two halfplanes. 
     \ \ $ Z_S$ \ and \ $Z_T $ \  are distinct if   \ $G_S$ \ and \ $ G_T $ \ are distinct. \ \
     \\  
       Then  there is an unique point \  $ { \sf P} \in L $ \ with the following properties: \\
     We draw the straight line \ $ Axis_{\:  { \sf P}} $  \ which meets \ 
  $   { \sf P} $ \ and which is parallel to \  $ Axis $ . \\
     We have to distinguish three cases, the main one (1) and two trivial ones (2),(3) .  \\
     (1) \qquad  \ $ S_{Axis} \neq S $ \ \ \ and \ \ \ $ T_{Axis} \neq T $ .  \\
     %%  ( Hence it is not  parallel to \   $  G_S$ , \  respectively \ $G_T $ ,   too. ) \ 
     Then \ \ \ $ Axis_{\:   { \sf P}} \  \neq \  Axis $ ,  \
     the  intersection of \ $ Axis_{\:  { \sf P}}  $ \ with \ $ Z_S $ \ is called \ $ S_{ \sf P} $, \ \ 
     and the intersection of \ $ Axis_{\:  { \sf P}}  $ \ with \ $ Z_T $ \ is called \  $ T_{\sf P} $ . \\
     Then the distance of  \ $ S_{Axis} $ \  and  \ $ Origin $ \ is equal to the 
        distance of \ $ {\sf P} $ \ and  \ $ T_{\sf P} $, \  and 
       the distance of  \ $ T_{Axis} $ \  and  \ $ Origin $ \ is equal to the 
       distance of \ $ {\sf P} $ \ and  \ $ S_{\sf P} $ . \            
     Furthermore, \ \ \ $ S_{Axis} $ \ and \  $ {\sf P} $ \  are on the same side of \ $Z_T $ , \ and \ 
       $ T_{Axis} $ \ and \  $ {\sf P} $ \ are on the same side of \ $Z_S $ , respectively. \ ( See Picture 2 ) .  \\
      (2) \qquad  \  \ $ S_{Axis} = S $ .  \ ( Hence, \ if \ \ $ G_S \neq G_T $ \ then \ $ T_{Axis} \neq T $ ) .   \\
      Then  \ \ \  $ {\sf P} := S_{Axis} = S $ , \ \ and \ \  $ Axis_{\: {\sf P}} \  =  \  Axis \ = \ Z_S $ .  \\
      The intersection of \ $ Axis_{\: {\sf P}}  $ \ with \ $ Z_T $ \ is \ \ $ T_{\sf P} :=  Origin $ \  . \\
     Then, by  triviality,  the distance of \  $ S_{Axis} $ \  and  \ $ Origin $  \ is equal to the      
              distance of \ $ {\sf P} $ \ and  \  $ T_{\sf P} $, and furthermore,  by  triviality, \ \ \ 
        $ S_{Axis} $ \ and \  $ {\sf P} $ \  are on the same side of \ $Z_T $ .   \\
     (3) \qquad  \  \ $ T_{Axis} = T $ . \ ( Hence, \ if \ \ $ G_S \neq G_T $ \ then \ $ S_{Axis} \neq S $ ) .    \\
      Then  \ \ \ $ {\sf P} := T_{Axis} = T $ , \ \ and \ \  $ Axis_{\: {\sf P}} \  =  \  Axis \ = \ Z_T $  .  \\
     The  intersection of \ $ Axis_{\: {\sf P}}  $ \ with \ $ Z_S $ \ is  \ \ $ S_{\sf P} := Origin $ .  \\ 
     Then, by  triviality,  the distance of \ \ $ T_{Axis} $ \  and  \ $ Origin $ \ \ is equal to the      
       distance of \ $ {\sf P} $ \ and  \ $ S_{\sf P} $, and    
       furthermore,  by  triviality, \ \ \ $ T_{Axis} $ \ and \  $ {\sf P} $ \ are on the same side of \ $Z_S $ .  \\
     \end{proposition}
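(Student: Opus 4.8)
The plan is to recognise that Proposition \ref{proposition Euclid2} is, up to a rigid motion of the plane, nothing but Proposition \ref{proposition Euclid1} in its case \textsf{(A)} (this is essentially the equivalence announced through lemma \ref{lemma Zwei}), so that after a suitable change of coordinates only some bookkeeping remains. First I would dispose of the two trivial cases: in (2) and (3) the point ${\sf P}$ and the line $Axis_{\sf P}$ are prescribed outright and one of $S_{\sf P}, T_{\sf P}$ is declared to be $Origin$, so the relevant segment on $Axis_{\sf P}$ coincides with the one on $Axis$ and the asserted distance equality and ``same side'' statement are immediate --- this is what ``by triviality'' in the statement refers to. Similarly, if $G_S = G_T$ everything collapses. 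So from now on assume $G_S \neq G_T$ and that we are in case (1), i.e. $S_{Axis} \neq S$ and $T_{Axis} \neq T$.

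Next I would choose a Euclidean isometry $\phi$ of $\mathbbm{R}^{2}$ with $\phi(Origin) = (0|0)$ and with $\phi(Axis)$ equal to the ${\tt x}$-axis (translate $Origin$ to the origin, then rotate the image of $Axis$ onto the ${\tt x}$-axis). Euclidean distance, parallelism, incidence, the property that a line misses a given point, and the relation ``two points lie on the same side of a line'' are all preserved by $\phi$, so we may assume outright that $Origin = (0|0)$ and that $Axis$ is the ${\tt x}$-axis. Since $Axis$ is not parallel to $G_S, G_T$, neither is the ${\tt x}$-axis, so $G_S$ meets it in the point $S_{Axis} = (a_S|0)$ and $G_T$ in the point $T_{Axis} = (a_T|0)$, with $a_S, a_T$ the numbers of Proposition \ref{proposition Euclid1}; moreover $(0|0) = Origin \notin L$, and the lines $Z_S, Z_T$ here coincide with the ones there. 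Hence the hypotheses of Proposition \ref{proposition Euclid1} in case \textsf{(A)} are met; let $P_{hor} = ({\tt x}_{hor}|{\tt y}_{hor})$ be the unique point it produces, and set ${\sf P} := P_{hor} \in L$.

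It remains to check that this ${\sf P}$ has the properties demanded in case (1). The line $Axis_{\sf P}$ is the horizontal line ${\tt y} = {\tt y}_{hor}$. Because $S_{Axis} \neq S$ and $T_{Axis} \neq T$, the points $S, T$ are not on the ${\tt x}$-axis, so ${\tt y}_S \neq 0 \neq {\tt y}_T$ and the lines $Z_S, Z_T$ (through the origin) are not horizontal; therefore $Axis_{\sf P}$ meets each of $Z_S, Z_T$ in exactly one point, which are $S_{\sf P}$ and $T_{\sf P}$. Moreover $Axis_{\sf P} \neq Axis$: if ${\tt y}_{hor} = 0$ held, the defining relation $({\tt x}_{hor} - a_S | {\tt y}_{hor}) \in Z_T$ would force ${\tt x}_{hor} = a_S$ (since $Z_T$ meets the ${\tt x}$-axis only at the origin), hence ${\sf P} = (a_S|0) = S_{Axis}$, and with ${\sf P} \in L$ this gives $S_{Axis} \in G_S \cap L = \{S\}$, contradicting case (1). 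Now the two defining relations of $P_{hor}$ say exactly that $({\tt x}_{hor} - a_S | {\tt y}_{hor})$ is the point $T_{\sf P}$ and $({\tt x}_{hor} - a_T | {\tt y}_{hor})$ is the point $S_{\sf P}$; consequently the distance of ${\sf P}$ and $T_{\sf P}$ equals $|a_S|$, which is the distance of $S_{Axis} = (a_S|0)$ and $Origin = (0|0)$, and the distance of ${\sf P}$ and $S_{\sf P}$ equals $|a_T|$, which is the distance of $T_{Axis}$ and $Origin$. Finally ${\sf P}$ and $S_{Axis}$ arise from the points $T_{\sf P} \in Z_T$ and $Origin \in Z_T$ by one and the same translation $({\tt x}|{\tt y}) \mapsto ({\tt x} + a_S | {\tt y})$, so they have the same signed distance to $Z_T$ and hence lie on the same side of it; the same argument with $a_T$ and $Z_S$ gives the last claim. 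For uniqueness: the ``same side'' conditions fix the signs of the horizontal offsets ${\sf P} - T_{\sf P}$ and ${\sf P} - S_{\sf P}$, so any point of $L$ satisfying the requirements of case (1) also satisfies the two defining relations of $P_{hor}$ and therefore equals $P_{hor}$.

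The step I expect to be the main obstacle is the first, conceptual one: seeing that the seemingly richer data of Proposition \ref{proposition Euclid2} --- the extra line $Axis$, the chosen $Origin$, the parallel line $Axis_{\sf P}$, the two distance equalities and the two side conditions --- is precisely the data of Proposition \ref{proposition Euclid1} in case \textsf{(A)} transported by an isometry, together with the identifications $T_{\sf P} = ({\tt x}_{hor} - a_S | {\tt y}_{hor})$ and $S_{\sf P} = ({\tt x}_{hor} - a_T | {\tt y}_{hor})$. Once this is seen, the only genuine care needed is the small non-degeneracy check that $Z_S, Z_T$ fail to be horizontal exactly in case (1) and that then $Axis_{\sf P} \neq Axis$, so that $S_{\sf P}, T_{\sf P}$ are well defined.
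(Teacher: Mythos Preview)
Your proposal is correct and follows essentially the same route as the paper: the paper's own argument for Proposition~\ref{proposition Euclid2} is the sentence ``With an easy transformation of coordinates, we get $(0|0)=Origin$, and {\tt x}-axis $=Axis$, hence ${\sf P}=P_{hor}$'' (Lemma~\ref{lemma Zwei}), which is precisely your reduction via a Euclidean isometry to case~{\sf (A)} of Proposition~\ref{proposition Euclid1}. The difference is only one of detail: you actually carry out the bookkeeping the paper leaves implicit --- the identifications $T_{\sf P}=({\tt x}_{hor}-a_S\,|\,{\tt y}_{hor})$, $S_{\sf P}=({\tt x}_{hor}-a_T\,|\,{\tt y}_{hor})$, the resulting distance equalities, the translation argument for the ``same side'' claims, the check that $Axis_{\sf P}\neq Axis$ in case~(1), and the uniqueness --- none of which appear explicitly in the paper.
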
 
      % ( See the enclosed picture )  \\  \\  \\  \\  \\  \\  \\
    %%%%%%%%%%%%%%%%%%%%%%%%%%%%%%%%%%%%%%%%%%%%%%%%%%%%%%%%%%%%%%%  
   %%%%%%%%%%%%   \input {anglepic9} 
   %%%%%%%%%%%%%%%%%%%%%%%%%%%%%%%%%%%%%%%
     \setlength{\parindent}{0mm}
    \begin{figure}[ht]
    \centering
     \setlength{\unitlength}{0.9cm}
   %% \begin{picture}(5,5)     \thinlines  
     \begin{picture}(6,6)     \thinlines  
      \put(8.11,-0,1){$  Axis    $}        \put(-0.1,-0.4){$ Origin $}   %% x-Achse
   \put(-7.0,0){\line(1,0){15.0}}  
     \linethickness{0.5mm} \put(-6.0,0.0){\line(1,0){6.0}} 
   \thinlines          
   \put(-5.0,3.75){\line(1,0){15.0}}  \put(10.15,3.65){$ Axis_{\: {\sf P}} $} %% Parallele zur x-Achse  durch P_{hor}
         \put(11.4,6.05){$G_T$}  \put(10.1,4.3){$G_S$} 
          \put(10.35,2.3){$ L $} %% \put(0.0,-2.5){$L: {\tt y}=-\frac{4}{3}{\tt x}-2$} 
         \put(-7.0,-0.33333333){\line(3,1){19.0}}  \put(-7.0,-1.33333333){\line(3,1){17.0}}                                                         \put(-3.0,4.5){\line(6,-1){14.0}}  %% Gerade L
            \put(5.85,2.625){$S$}     \put(3.85,2.9){$T$}    
                \put(-4.0,-3.3333333){\line(6,5){11.5}} %%   Gerade Z_T   
                \put(-4.0,-2.0000000){\line(2,1){14.5}} %%   Gerade Z_S
       \put(10.5,5.1){$ Z_S $}   %% \put(8.0,5.0){$Z_S: {\tt y}= \frac{2}{3}{\tt x}$}  
       \put(7.5,6.2){$ Z_T $}  %% \put(7.5,6.2){$Z_T:{\tt y}= \frac{3}{2}{\tt x} $}      
               %%     \put(2.2,2.65){$P_{hor}$ }    \put(2.45,2.30){$\times$ }  
         \put(-3.0,-0.35){$  S_{Axis} $}        \put(-6.0,-0.35){$  T_{Axis} $}                
         \put(1.4,3.85){$ {\sf P} $}      \put(1.3,3.65){$\times$ } 
         \linethickness{0.5mm} \put(1.5,3.75){\line(1,0){6.0}}   
      \put(4.2,3.85){$ T_{\sf P} $}            \put(7.2,3.85){$ S_{\sf P} $}     
      \end{picture}
    \end{figure} 
        \begin{center}     Picture 2  \end{center}  
     %  \quad  \\  \\  %  \\    \\  \\  \\  \\  \\  \\  
    %%%%%%%%%%%%%%%
    \newpage
    \begin{lemma}   \label{lemma Zwei}  \rm   \qquad
    We have that \quad
     proposition \ref{proposition Euclid1} \ \ $\Longleftrightarrow $ \ \ proposition \ref{proposition Euclid2} .
    \end{lemma}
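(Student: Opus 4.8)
The idea is that each instance of one of the two propositions becomes, after a suitable \emph{affine} change of coordinates, an instance of the other. The one delicate point is that equality of the lengths of two segments is not preserved by a general affine map; what rescues the argument is that the two segments that get compared in Proposition~\ref{proposition Euclid2} are always parallel to the line $Axis$ (one lies on $Axis$, the other on $Axis_{\sf P}$, which is parallel to $Axis$), so a fixed affine map multiplies both of their lengths by one and the same positive factor and hence preserves their equality.

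\medskip
\emph{Proposition~\ref{proposition Euclid2} $\Longrightarrow$ Proposition~\ref{proposition Euclid1}.} Start from the configuration of Proposition~\ref{proposition Euclid1}; in case~{\sf (A)} apply Proposition~\ref{proposition Euclid2} with $Axis$ equal to the ${\tt x}$-axis and $Origin := (0|0)$, and in case~{\sf (B)} with the ${\tt y}$-axis instead (the two cases being symmetric under exchanging the axes). All hypotheses of Proposition~\ref{proposition Euclid2} are in force: $G_S,G_T$ are not parallel to $Axis$ by the case assumption, $L$ is not parallel to $G_S,G_T$, and $L \neq Axis$ as well as $Origin \in Axis \setminus L$ because $L$ misses the origin. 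Here $S_{Axis} = ( a_S | 0 )$ and $T_{Axis} = ( a_T | 0 )$, the lines $Z_S, Z_T$ coincide with those of Proposition~\ref{proposition Euclid1}, and $Axis_{\sf P}$ is the horizontal line through ${\sf P}$, so $T_{\sf P}$ is its intersection with $Z_T$ and $S_{\sf P}$ its intersection with $Z_S$. A short computation then shows that the equality $d(S_{Axis},Origin) = d({\sf P},T_{\sf P})$ together with ``$S_{Axis}$ and ${\sf P}$ lie on the same side of $Z_T$'' is equivalent to the single \emph{signed} relation ${\sf P} - ( a_S | 0 ) \in Z_T$, which is exactly what Proposition~\ref{proposition Euclid1} requires of $P_{hor}$; symmetrically the other distance-and-side condition is equivalent to ${\sf P} - ( a_T | 0 ) \in Z_S$. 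Hence the unique point ${\sf P}$ given by Proposition~\ref{proposition Euclid2} is precisely $P_{hor}$, and Proposition~\ref{proposition Euclid1} follows. Its trivial cases {\sf (2)} and {\sf (3)} ($S_{Axis} = S$, respectively $T_{Axis} = T$) correspond to the trivial cases of the Remark ($S = ( a_S | 0 )$, respectively $T = ( a_T | 0 )$).

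\medskip
\emph{Proposition~\ref{proposition Euclid1} $\Longrightarrow$ Proposition~\ref{proposition Euclid2}.} Given the configuration of Proposition~\ref{proposition Euclid2}, choose an affine isomorphism $\Phi \colon \mathbbm{R}^{2} \to \mathbbm{R}^{2}$ with $\Phi(Origin) = (0|0)$ and $\Phi(Axis)$ equal to the ${\tt x}$-axis; such a $\Phi$ exists, since one may translate $Origin$ to the origin and then apply an invertible linear map carrying a direction vector of $Axis$ to $( 1 | 0 )$. Because $\Phi$ sends lines to lines, parallels to parallels, intersection points to intersection points and halfplanes to halfplanes, the images $\Phi(G_S), \Phi(G_T)$ are parallel lines, not parallel to the ${\tt x}$-axis (as $Axis$ is not parallel to $G_S$), $\Phi(L)$ is not parallel to them, and $(0|0) = \Phi(Origin) \notin \Phi(L)$ because $Origin \notin L$. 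Thus Proposition~\ref{proposition Euclid1}, case~{\sf (A)}, applies to $\Phi(G_S), \Phi(G_T), \Phi(L)$ and produces a unique point $P_{hor}$; set ${\sf P} := \Phi^{-1}(P_{hor})$.

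\medskip
Finally one transports the conclusions back through $\Phi^{-1}$. The objects $S,T,S_{Axis},T_{Axis},Z_S,Z_T,Axis_{\sf P},S_{\sf P},T_{\sf P}$ that Proposition~\ref{proposition Euclid2} attaches to ${\sf P}$ are carried by $\Phi$ to those that Proposition~\ref{proposition Euclid1} attaches to $P_{hor}$ -- for instance $\Phi(S_{Axis})$ is the point in which the ${\tt x}$-axis meets $\Phi(G_S)$, namely $( a_S | 0 )$, and $\Phi(Z_S)$ is the line joining $(0|0)$ to $\Phi(S)$. The segments from $Origin$ to $S_{Axis}$ and from $Origin$ to $T_{Axis}$ lie on $Axis$, while those from ${\sf P}$ to $T_{\sf P}$ and from ${\sf P}$ to $S_{\sf P}$ lie on $Axis_{\sf P}$; all four are parallel to $Axis$, so $\Phi$ multiplies their four lengths by one and the same positive factor. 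Hence the distance equalities and same-side clauses required of ${\sf P}$ by Proposition~\ref{proposition Euclid2} hold if and only if the analogous relations hold for $\Phi({\sf P}) = P_{hor}$, and -- as in the first part -- those analogous relations are exactly the two defining relations $( {\tt x}_{hor} - a_S | {\tt y}_{hor} ) \in Z_T$ and $( {\tt x}_{hor} - a_T | {\tt y}_{hor} ) \in Z_S$ of Proposition~\ref{proposition Euclid1}, which hold by construction of $P_{hor}$. The trichotomy {\sf (1)},{\sf (2)},{\sf (3)} matches the Remark exactly as above, and uniqueness of ${\sf P}$ follows from that of $P_{hor}$ since $\Phi$ is bijective. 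The step I expect to be the main obstacle is precisely the bookkeeping forced by the non-invariance of length under affine maps -- one must be careful that both compared segments are genuinely parallel to $Axis$ -- together with tracking the signs in the halfplane conditions and checking that $\Phi$ neither creates nor destroys any of the degenerate sub-cases that the two statements single out.
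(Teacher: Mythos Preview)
Your proposal is correct and follows essentially the same strategy as the paper: recognise Proposition~\ref{proposition Euclid1} as the special case of Proposition~\ref{proposition Euclid2} with $Origin=(0|0)$ and $Axis$ equal to a coordinate axis, and conversely reduce an arbitrary instance of Proposition~\ref{proposition Euclid2} to Proposition~\ref{proposition Euclid1} by a coordinate change sending $Origin$ to $(0|0)$ and $Axis$ to the ${\tt x}$-axis. The paper's own proof says exactly this in two sentences.

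The one point on which you differ from the paper is the nature of the coordinate change. You take a general affine isomorphism $\Phi$ and then spend effort arguing that, although $\Phi$ does not preserve lengths, the four segments whose lengths are compared all lie parallel to $Axis$ and therefore scale by a common factor. This is correct, but it is more machinery than the situation requires: a Euclidean isometry (translate $Origin$ to the origin, then rotate $Axis$ onto the ${\tt x}$-axis) already does the job and preserves \emph{all} distances, so the scaling discussion evaporates. The paper's phrase ``easy transformation of coordinates'' is presumably intended in this rigid sense. On the other hand, your explicit verification that the distance-plus-halfplane clauses of Proposition~\ref{proposition Euclid2} are equivalent to the signed membership conditions $({\tt x}_{hor}-a_S\,|\,{\tt y}_{hor})\in Z_T$ and $({\tt x}_{hor}-a_T\,|\,{\tt y}_{hor})\in Z_S$ of Proposition~\ref{proposition Euclid1}, and your matching of the degenerate sub-cases, fill in details that the paper leaves entirely to the reader.
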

    \begin{proof}  \quad  \ \
     proposition \ref{proposition Euclid1} \ \ $ \Longleftarrow $ \ \ proposition \ref{proposition Euclid2}:  \\
     Obviously, the two situations which are described in \  proposition \ref{proposition Euclid1} are 
     special cases of the general situation in proposition \ref{proposition Euclid2} .  \ More detailed,
     we have \ \ $ Origin := ( 0|0 ) $ \  and \ 
     if we define \quad $ Axis $ := {\tt x}-axis  \ \ we get \ \ $ P_{hor} = {\sf P} $ , \ and  \ 
     \   $ Axis $  := {\tt y}-axis \ \ yields \ \ $ P_{ver} = {\sf P} $ , \ respectively.    \\
     proposition \ref{proposition Euclid1} \ \ $ \Longrightarrow $ \ \ proposition \ref{proposition Euclid2}:  \\
     With an easy transformation of coordinates, we  get \ $ ( 0|0 ) = Origin $ , \ and \ 
      {\tt x}-axis = $ Axis $ , hence \   $ {\sf P} = P_{hor}$ .  
    \end{proof} 
     \   \\ 
         Now follows another  piece of  'Grecian Geometry'.    
      \begin{proposition}   \rm     \label{proposition D} 
        Let us again take \ $ \mathbbm{R}^{2} = \{ ({\tt x}|{\tt y}) | \ {\tt x},{\tt y} \in \mathbbm{R} \} $ .
     with the  horizontal {\tt x}-axis and  the vertical  {\tt y}-axis.
     Consider the two parallel lines \ $ G, P $ ($P$ means 'projection line') , 
                                 \ with the property that $ G $ does not meet  $ (0|0) $.  
     Assume  a fixed \ $ \varepsilon \in  \mathbbm{R}$ . \  % \varepsilon > 0  $ , 
     %   $  G: {\tt y} = m \cdot {\tt x} + b_G \ $ { \rm and} \ $ P: {\tt y} =  m \cdot {\tt x} + b_P \ $ 
     Let us choose  a point \ $ ( \widehat{{\tt x}} | \widehat{{\tt y}} )$ \ on $ G \ , \widehat{{\tt y}} \neq 0 $ , 
     such that  neither    the line that connects \ $ (0|0)$ \ and \   
     $ S := ( \widehat{{\tt x}}-\varepsilon  | \widehat{{\tt y}} )$, \ nor the line that connects \ $ (0|0)$ \ and \ 
      \ $ T := ( \widehat{{\tt x}}+\varepsilon   | \widehat{{\tt y}} )$ \ is parallel to \ $ G$ \ and \ $ P $ \ . 
     We call \ $ \overline{S}$ \ the projection of $ S $ on the line $ P $ , \  and  $ \overline{T}$ \ 
     the projection   of $ T $ on the line $ P $. \ (That means that the three points \
      $ (0|0), S , \overline{S}$, \ and the three points \ $ (0|0), T , \overline{T} $, respectively,
      \ are collinear, \ $ \overline{S}, \overline{T}  \in P $.)  The four points \ 
      $  \overline{S}, \overline{T},  -\overline{S}, -\overline{T} $ \ are the corners of a parallelogram.
      We call  \ $ '\nu' $ \ the intersection of the line that connects \ $ \overline{T} $ \ and \ $  -\overline{S}$ \  
       with the   horizontal  {\tt x}-axis. For the claim  we distinguish two disjoint cases:   \\
     { \sf (A)}  \quad If \ $ P $ \ and \  $ G $ \ 
         are parallel to the vertical {\tt y}-axis, then \ $ \nu $ \ depends only on  $ \varepsilon $ \ 
         and on the intersections of the horizontal \ {\tt x}-axis \ with $G$ \ and \ $P$, respectively. \\ 
      { \sf (B)} \quad If  $ P $ \ and \ $ G $ \ 
        are not parallel to the vertical {\tt y}-axis, then \ $ \nu $ \ depends  only  on  $ \varepsilon $ \ 
        and on the intersections of the vertical \ {\tt y}-axis \ with  $G$ \ and \ $P$,  respectively. 
         \ \   ( See Picture 3 ) .      \\  \\
     In other words we claim that the value of  \ $ \nu $ \
      does not depend on the choice of  \ $ ( \widehat{{\tt x}} | \widehat{{\tt y}} )$ \ on $ G $ ,   and also, \
      ( in case  { \sf (B)}  ) , \ that \ $ \nu $ \ does not depend on the slope  of \   $G$ \ and \ $P$ .     \\ 
      %%  \\   (See at first the picture) .     
      \end{proposition}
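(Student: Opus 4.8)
The plan is to set up coordinates and compute $\nu$ explicitly in each of the two cases, then observe that the resulting formula involves only $\varepsilon$ and the two relevant axis-intercepts. First I would parametrize $G$ and $P$. In case {\sf (B)}, where neither line is vertical, write $G:\ {\tt y} = m{\tt x} + g$ and $P:\ {\tt y} = m{\tt x} + p$ for a common slope $m$; the ${\tt y}$-intercepts are $g$ and $p$, which are exactly ``the intersections of the vertical ${\tt y}$-axis with $G$ and $P$.'' In case {\sf (A)}, where both lines are vertical, write $G:\ {\tt x} = g$ and $P:\ {\tt x} = p$; now $g,p$ are the ${\tt x}$-intercepts. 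In either case, for a chosen $(\widehat{{\tt x}}|\widehat{{\tt y}})$ on $G$, set $S = (\widehat{{\tt x}}-\varepsilon\,|\,\widehat{{\tt y}})$ and $T = (\widehat{{\tt x}}+\varepsilon\,|\,\widehat{{\tt y}})$, both required to lie on $G$'s support-structure only loosely --- note $S,T$ are generally \emph{not} on $G$, only $(\widehat{{\tt x}}|\widehat{{\tt y}})$ is; this is just a horizontal displacement by $\pm\varepsilon$ from a point of $G$.

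Next I would compute the projections. By definition $\overline{S}$ is the intersection of the line through $(0|0)$ and $S$ with $P$, and likewise $\overline{T}$ for $T$. Since these lines pass through the origin, $\overline{S} = \lambda_S S$ and $\overline{T} = \lambda_T T$ for scalars $\lambda_S,\lambda_T$ determined by the condition that $\lambda_S S \in P$ and $\lambda_T T \in P$. In case {\sf (B)} this gives $\lambda_S = p/(m(\widehat{{\tt x}}-\varepsilon) - \widehat{{\tt y}} + \text{(something)})$ --- more cleanly, using $\widehat{{\tt y}} = m\widehat{{\tt x}} + g$, one finds $\lambda_S$ and $\lambda_T$ as ratios $p/g$-like quantities; I expect the displacement $\widehat{{\tt x}}$ to drop out of the crucial combination. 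Then $-\overline{S} = -\lambda_S S$ and $-\overline{T} = -\lambda_T T$ by construction, and I compute $\nu$ as the ${\tt x}$-coordinate of the intersection of the line through $\overline{T}$ and $-\overline{S}$ with the ${\tt x}$-axis: solving a $2\times 2$ linear system, $\nu = \dfrac{{\tt x}_{\overline{T}}\,{\tt y}_{-\overline{S}} - {\tt x}_{-\overline{S}}\,{\tt y}_{\overline{T}}}{{\tt y}_{-\overline{S}} - {\tt y}_{\overline{T}}}$, which by antisymmetry of the two points simplifies to $\dfrac{{\tt x}_{\overline{T}}\,{\tt y}_{\overline{S}} + {\tt x}_{\overline{S}}\,{\tt y}_{\overline{T}}}{{\tt y}_{\overline{S}} + {\tt y}_{\overline{T}}}$.

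Then I would substitute the explicit coordinates of $\overline{S},\overline{T}$ into this expression and simplify. The claim is that, after simplification, $\widehat{{\tt x}}$ and $\widehat{{\tt y}}$ (and, in case {\sf (B)}, the slope $m$) cancel, leaving $\nu$ as a function of $\varepsilon$, $g$, $p$ alone. The main obstacle is purely computational: verifying that the algebra genuinely collapses to eliminate the three free parameters. The key structural reason it should work is that scaling $(\widehat{{\tt x}}|\widehat{{\tt y}})$ along $G$ rescales $\overline{S},\overline{T}$ in a correlated way, and the midpoint-type / projective character of the $\overline{T}$-to-$(-\overline{S})$ line through the ${\tt x}$-axis is invariant under that family of deformations. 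For case {\sf (A)} the computation is shorter since both $G$ and $P$ are vertical, so $S,T,\overline{S},\overline{T}$ all have controlled first coordinates ($g$ or $p$), and one checks directly that $\nu$ depends only on $\varepsilon$, $g$, $p$; here there is no slope to worry about, which is exactly why the proposition does not assert slope-independence in that case. I would finish by recording the closed-form values of $\nu$ in both cases and noting that each manifestly has the asserted dependence, which completes the proof.
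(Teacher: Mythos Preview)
Your approach is essentially the same as the paper's: parametrize $G$ and $P$ by a common slope and their axis-intercepts, observe that $\overline{S}=\lambda_S S$ and $\overline{T}=\lambda_T T$ are scalar multiples of $S,T$, compute the ${\tt x}$-intercept of the line through $\overline{T}$ and $-\overline{S}$, and check that the result depends only on $\varepsilon$ and the two intercepts. The paper carries this out explicitly and obtains $\nu=b_P\varepsilon/b_G$ in case {\sf (B)} and $\nu=p\varepsilon/r$ in case {\sf (A)}.

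There is one concrete slip worth flagging: your simplified formula for $\nu$ has the wrong sign in the numerator. Starting from your own intermediate expression and substituting $x_{-\overline{S}}=-x_{\overline{S}}$, $y_{-\overline{S}}=-y_{\overline{S}}$, one gets
\[
\nu \;=\; \frac{x_{\overline{T}}\,y_{\overline{S}} \;-\; x_{\overline{S}}\,y_{\overline{T}}}{\,y_{\overline{S}}+y_{\overline{T}}\,},
\]
with a minus, not a plus. This matters for the conclusion: with the (incorrect) plus sign, plugging in $\overline{S}=\frac{p}{g+m\varepsilon}(\widehat{{\tt x}}-\varepsilon\,|\,\widehat{{\tt y}})$ and $\overline{T}=\frac{p}{g-m\varepsilon}(\widehat{{\tt x}}+\varepsilon\,|\,\widehat{{\tt y}})$ gives $\nu=p\,\widehat{{\tt x}}/g$, which \emph{does} depend on $\widehat{{\tt x}}$ and would falsify the claim. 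With the correct minus sign the numerator collapses to $2\varepsilon$ (not $2\widehat{{\tt x}}$) times the common factor, and you recover $\nu=p\varepsilon/g$, independent of $\widehat{{\tt x}},\widehat{{\tt y}},m$ as required. Once this sign is corrected, your outline is complete and matches the paper's argument; you should simply carry out the substitution rather than leave it as ``I expect the displacement to drop out.''
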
 \quad     \\   \\    %  \\
    %%%%%%%%%%%%%%%%%%%%%%%%%%%%%%%%%%%%%%%%%%%%%%%%%%%%%%%%%%  
    %%%%%%%%%%  \input {anglepic3}   
    %%%%%%%%%%%%%%%%%%%%%%%%%%%%%%%%%%%%%
     \setlength{\parindent}{0mm}
    \begin{figure}[ht]
    \centering
     \setlength{\unitlength}{1cm}
    \begin{picture}(6,6)     \thinlines  
      \put(6.15,-0,1){$   {\tt x}   $}     \put(-0.3,5.8){$  {\tt y} $}    
       \put(-6.0,0){\vector(1,0){12.0}}     \put(0,-3){\vector(0,1){9}}  
       \put(-0.85,0.9){$ \overline{S} $}   \put(-4.0,4.1){$ S $}  \put(+0.85,-0.9){$ -\overline{S} $} 
        \put(-1.85,-2.4){$ \overline{T} $}   \put(4.1,3.7){$ T $}    \put(2.0,-0.3){$ \nu $} 
        \put(-0.9,4.2){$ ( \widehat{{\tt x}} | \widehat{{\tt y}} )$ }   
        \put(1.6,5.1){$P: {\tt y}=2\cdot{\tt x}+2$}  
           \put(1.0,6.1){$G: {\tt y}=2\cdot{\tt x}+4$}   \put(4.95,1.1){${\tt y}= \frac{1}{2} \cdot {\tt x}  -1$} 
             \put(-3.5,-3.0){\line(1,2){4.5}}  \put(-2.5,-3.0){\line(1,2){4.0}}   \put(-4.0,4.0){\line(1,0){8.0}} 
          \put(-5.0,5.0){\line(1,-1){7.0}}  \put(-3.5,-3.5){\line(1,1){9.0}}  \put(-3.0,-2.5){\line(2,1){8.0}} 
     \put(4.8,7.3){We fix two lines $ P,G$ (with slope 2) , } 
     \put(4.8,6.6){and  $  \varepsilon := 4$ . \ We choose \ $ (\widehat{{\tt x}} | \widehat{{\tt y}}) := (0|4)$ .}          \put(4.8,5.9){We  get $ \overline{S }=( -\frac{2}{3}|\frac{2}{3} ) ,  \quad \overline{T}=(-2|-2)$ . }    
     \put(6.8,5.2){ \ Hence we  get  $ \nu = 2 $ . }   
         \end{picture}
   \end{figure}	       
        \qquad   \\
          \hspace*{\fill}       Picture 3  \qquad  \qquad  
       \quad  \\   \\ % \\  \\   \\  \\  \\  \\  % \\  \\   \\  \\  \\  \\ % \\ \\ 
      \begin{proof}
     First the trivial cases. If \ $ P $ \ meets the origin $(0|0)$ , then 
     $ (0|0) = \overline{T} = \overline{S} $ , and the parallelogram collapses into a single point $(0|0) =  \nu $ . 
      If \ $ \varepsilon = 0 $ , then   $ \overline{T} = \overline{S} $ , and the parallelogram degenerates 
      to a line between $ \overline{T}$ and $ -\overline{S} $ , \ that meets \ $(0|0) =  \nu $ . \\
     Hence we assume that \ $ P $ \ does not meet the origin $(0|0)$ , and we take (without loss of generality)
      an   \ $ \varepsilon > 0 $ . \quad We distinguish between vertical \ $ G,P$ \ and not vertical \ $ G,P$ . 
       Thus assume  vertical lines \ $ G $ \ and \ $ P$ \
        with equations \ $ G: \ {\tt x}=r $ \ and \ $  P: \ {\tt x}=p $ . 
      After choosing  a point \ $ ( \widehat{{\tt x}} | \widehat{{\tt y}} )$ \ on $ G \ , \widehat{{\tt y}} \neq 0 $ ,
      we can compute \ $  \overline{S} $ \ and \ $  \overline{T}$ .  \    
     %%%%%%%%%%%%%%%% 
     Because  $ G $ does not meet  $ (0|0) $ \ we have \ $ r \neq 0 $ , \ and some easy
    calculations yield \ \ $ \nu = p \cdot \varepsilon / r $ .   \\
   In the case that  \ $ G,P$ \ are not vertical they have a  slope \ $m \in \mathbbm{R}$, \ and there are equations 
     \begin{equation*}
     G: \ {\tt y} = m \cdot {\tt x} + b_G \qquad { \rm and} \qquad   P: \ {\tt y} = m \cdot {\tt x} + b_P 
     \end{equation*}
     with \ \ $ m, b_G, b_P \in \mathbbm{R}, \ \  b_G, b_P \neq 0 $ .   
    After choosing a point \ $ ( \widehat{{\tt x}} | \widehat{\tt y} )$ \ on $ G \ , \ \widehat{{\tt y}} \neq 0 $ ,        we get with  elementary  calculations \  
     $$  \overline{S} = \frac{b_P}{b_G + m \cdot \varepsilon} \cdot 
    ( \widehat{{\tt x}} - \varepsilon \: | \: \widehat{{\tt y}} ) \qquad { \rm and } \qquad 
      \overline{T} = \frac{b_P}{b_G - m \cdot \varepsilon} \cdot 
    ( \widehat{{\tt x}} + \varepsilon \ | \ \widehat{{\tt y}} ) \ . $$ 
    Some more calculations yield the formula  
    $$ {\tt y} = \frac{ m \cdot \widehat{{\tt x}} + b_G } {\widehat{{\tt x}} \cdot b_G + m \cdot \varepsilon^{2}}
       \ \cdot \ [ \; b_G \cdot {\tt x} - b_P \cdot \varepsilon \; ]  $$ 
        for a non vertical straight line that intersects \
       $  \overline{T}  \   \rm  and \  -\overline{S} $ , \  and finally we get \quad
      $ \nu = b_P \cdot \varepsilon / b_G $ . \\ If the line that connects \
       $ \overline{T} \ \rm and \  -\overline{S} $ \ is vertical  we get the same formula for
       \ $ \nu $ , \ and  the proof of the   proposition is  complete.  
   \end{proof} 
  \begin{remark}    \rm
   The four points \ $ \overline{S}, \overline{T},  -\overline{S}, -\overline{T} $ \ form the corners 
   of a parallelogram, and, corresponding to the the value of $ \nu $ which is the intersection of
   the line through $ \overline{T} $ and $ -\overline{S}$ with the horizontal axis, the line through
     $ \overline{S} $ and $ -\overline{T}$ meets the same axis in   $-\nu  $, hence in $ -p \cdot \varepsilon / r $,
   (if both lines $ G,P $ are vertical ) , \ or in \ \  $ -b_P \cdot \varepsilon / b_G $       
    (if both lines $ G,P $ are not vertical ) .
   \end{remark} 
   \begin{corollary}    \rm
    If we reverse the roles  of  the   {\tt x}-axis  and {\tt y}-axis, we are able to formulate a 
    corresponding statement: 
      Consider the two parallel lines \ $ G, P $ , \ with the property that $ G $ does not meet  $ (0|0) $.  
     Assume  a fixed \ $ \varepsilon \in  \mathbbm{R}$ . \  
     Let us choose  a point \ $ ( \widehat{{\tt x}} | \widehat{{\tt y}} )$ \ on $ G \ , \widehat{{\tt x}} \neq 0 $ , 
     such that  neither    the line that connects \ $ (0|0)$ \ and \   
     $ S_v := ( \widehat{{\tt x}} | \widehat{{\tt y}}-\varepsilon  )$, \ nor the line that connects \ $ (0|0)$ \ and \ 
      \ $ T_v := ( \widehat{{\tt x}}  | \widehat{{\tt y}}+\varepsilon  )$ \ is parallel to \ $ G$ \ and \ $ P $ \ . 
     We call \ $ \overline{S_v}$ \ the projection of $ S_v $ on the line $ P $, \  and  $ \overline{T_v}$ \ 
     the projection   of $ T_v $ on the line $ P $. \ (That means that the three points \
      $ (0|0), S_v , \overline{S_v }$, \ and the three points \ $ (0|0), T_v  , \overline{T_v } $, respectively,
      \ are collinear, \ $ \overline{S_v }, \overline{T_v }  \in P $.)  The four points \ 
      $  \overline{S_v }, \overline{T_v },  -\overline{S_v }, -\overline{T_v } $ \ are the corners of a parallelogram.
      We call \ $ '\mu' $ \ the intersection of the line that connects \ $ \overline{T_v } $ \ and 
      \ $  -\overline{S_v }$ \   with the    vertical  {\tt y}-axis , 
      \  and we claim that the value of  \ $ \mu $ \  does not depend on the choice of  \ 
      $ ( \widehat{{\tt x}} | \widehat{{\tt y}} )$ \ on $ G $ .       
    \end{corollary}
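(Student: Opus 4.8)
The plan is to deduce the corollary from Proposition \ref{proposition D} by using the reflection of the plane across the diagonal, instead of redoing the computation. Let $\sigma\colon\mathbbm{R}^{2}\to\mathbbm{R}^{2}$ be the involution $\sigma({\tt x}|{\tt y}):=({\tt y}|{\tt x})$. This map is a bijection, it fixes the origin $(0|0)$, it interchanges the horizontal {\tt x}-axis with the vertical {\tt y}-axis, and, being linear, it maps straight lines to straight lines, preserves parallelism and collinearity, and satisfies $\sigma(-Q)=-\sigma(Q)$ for every point $Q$. In particular, since "the projection of a point $Q$ on a line $P$ through the origin" is characterised by the collinearity of $(0|0)$, $Q$ and that projection together with membership of the projection in $P$, the map $\sigma$ sends the projection of $Q$ on $P$ to the projection of $\sigma(Q)$ on $\sigma(P)$.

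Next I would start from an arbitrary admissible configuration $G,P,\varepsilon,(\widehat{{\tt x}}|\widehat{{\tt y}})$ of the corollary and apply $\sigma$ to everything. Set $(\widehat{{\tt x}}'|\widehat{{\tt y}}'):=\sigma(\widehat{{\tt x}}|\widehat{{\tt y}})=(\widehat{{\tt y}}|\widehat{{\tt x}})$, $G':=\sigma(G)$, $P':=\sigma(P)$. Then $\widehat{{\tt y}}'=\widehat{{\tt x}}\neq 0$, the line $G'$ is parallel to $P'$ and does not meet the origin, and $\sigma(S_v)=(\widehat{{\tt y}}-\varepsilon\,|\,\widehat{{\tt x}})=(\widehat{{\tt x}}'-\varepsilon\,|\,\widehat{{\tt y}}')$, $\sigma(T_v)=(\widehat{{\tt x}}'+\varepsilon\,|\,\widehat{{\tt y}}')$. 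The hypothesis that the lines joining $(0|0)$ to $S_v$ and to $T_v$ are not parallel to $G$ and $P$ becomes, after applying $\sigma$, the hypothesis that the lines joining $(0|0)$ to $\sigma(S_v)$ and to $\sigma(T_v)$ are not parallel to $G'$ and $P'$. Hence $G',P',\varepsilon,(\widehat{{\tt x}}'|\widehat{{\tt y}}')$ is exactly a configuration to which Proposition \ref{proposition D} applies, with $S=\sigma(S_v)$, $T=\sigma(T_v)$, and, by the first paragraph, $\overline{S}=\sigma(\overline{S_v})$, $\overline{T}=\sigma(\overline{T_v})$.

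Now $\mu$ is the point where the line through $\overline{T_v}$ and $-\overline{S_v}$ meets the {\tt y}-axis. Applying $\sigma$, this point is sent to the point where the line through $\sigma(\overline{T_v})=\overline{T}$ and $\sigma(-\overline{S_v})=-\overline{S}$ meets $\sigma(\text{{\tt y}-axis})=$ {\tt x}-axis, that is, to the point "$\nu$" of the transformed configuration. By Proposition \ref{proposition D}, this $\nu$ depends only on $\varepsilon$ and on data attached to $G'$ and $P'$ (their intersections with one of the axes), and not on the choice of $(\widehat{{\tt x}}'|\widehat{{\tt y}}')$ on $G'$. Since $\sigma$ is one fixed bijection, and since $(\widehat{{\tt x}}'|\widehat{{\tt y}}')$ ranges over all admissible points of $G'$ precisely when $(\widehat{{\tt x}}|\widehat{{\tt y}})$ ranges over all admissible points of $G$, it follows that $\mu=\sigma(\nu)$ is independent of the choice of $(\widehat{{\tt x}}|\widehat{{\tt y}})$ on $G$; explicitly, the {\tt y}-coordinate of $\mu$ equals the {\tt x}-coordinate of $\nu$. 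Unwinding the formulas of Proposition \ref{proposition D} one even obtains $\mu=p\cdot\varepsilon/r$ in terms of the {\tt y}-intercepts $r,p$ of $G,P$ when $G,P$ are horizontal, and $\mu$ equal to ($x$-intercept of $P)\cdot\varepsilon/(x$-intercept of $G)$ when $G,P$ are not horizontal.

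The only real work is bookkeeping: one must verify that every non-degeneracy hypothesis of the corollary is transported by $\sigma$ to the corresponding hypothesis of Proposition \ref{proposition D} (in particular the two "not parallel to $G$ and $P$" clauses, the condition $\widehat{{\tt x}}\neq 0$, and "$G$ does not meet $(0|0)$"), and that $\sigma$ genuinely commutes with the projection-through-the-origin operation; both follow at once from the linearity of $\sigma$ and the fact that it fixes the origin, so no genuine obstacle arises.
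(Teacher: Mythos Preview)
Your argument is correct and follows exactly the approach the paper intends: the paper's entire proof is the single sentence ``Trivial with the previous proposition'', relying on the phrase ``reverse the roles of the {\tt x}-axis and {\tt y}-axis'' in the statement, and your reflection $\sigma({\tt x}|{\tt y})=({\tt y}|{\tt x})$ is precisely that role-reversal made explicit. You have simply spelled out in detail the bookkeeping that the paper leaves to the reader.
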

     \begin{proof} Trivial with the previous proposition. 
        \end{proof}
       Again we  write down the last proposition in a seemingly more general form.
      \begin{proposition}   \rm      \label{proposition Euclid3} 
        Let us again take the euclidean space \     $ \mathbbm{R}^{2}$ . \
      Consider the two parallel lines \ $ G, P $ ($P$ means 'projection line') , \ and an arbitrary third line 
      ( $ \neq G $ ) \ that we 
      call \  $ Axis $  . We fix a point \ $ Origin$ \ on \ $ Axis \backslash G $ ,  
      and  an \ $ \varepsilon \geq 0 $ . \  % \varepsilon > 0  $ , 
     %   $  G: {\tt y} = m \cdot {\tt x} + b_G \ $ { \rm and} \ $ P: {\tt y} =  m \cdot {\tt x} + b_P \ $ 
     Let us choose  a point \ $ ( \widehat{{\tt x}} | \widehat{{\tt y}} )$ \ on $ G \backslash Axis $, 
     and draw the straight line \  $  \widehat{Axis} $ , \ meeting \ $( \widehat{{\tt x}} | \widehat{{\tt y}} )$ \ 
     and parallel to \ $ Axis$  . \  We  mark two unique points \ $ S, T $ \ on \ $  \widehat{Axis} $ , such that the 
     distance of both to \ $ ( \widehat{{\tt x}} | \widehat{{\tt y}} )$ \  is \ $ \varepsilon $ .  
     We assume the extra property that the line that connects  \ $ Origin $ \ and \ $ S $ \ and the line
     that   connects  \ $ Origin $ \ and \ $ T $ \ are not parallel to \ $ G $ and $ P $ , respectively.  
     Thus we are able to   'project' \ $ S $ \ and \ $ T $ \ onto \ $ P $.   
      We call \ $ \overline{S} $ \ the projection of $ S $ on the line $ P $ , \  and \ $ \overline{T} $ \ 
     the projection   of $ T $ on the line $ P $, both projections relatively to $ Origin $ .
      \ (That means that the three points \
      $ Origin , S , \overline{S}$, \ and the three points \ $ Origin , T , \overline{T} $, respectively,
      \ are collinear, \ $ \overline{S}, \overline{T}  \in P $.) 
      Further we denote   two points \ $ -\overline{S},   -\overline{T} $,  \ such that the four points   
      $ Origin , S , \overline{S}, -\overline{S}$, \ and the four points \ $ Origin , T , \overline{T} ,                         -\overline{T}$, respectively, are collinear, and 
      the distance of \ $ Origin$ and $\overline{S} $ \ is equal to the distance of \ $ Origin$ and $-\overline{S}$,
       \     and \ 
    the distance of \ $ Origin$ and $\overline{T} $ \ is equal to the distance of \ 
    $ Origin$ and $-\overline{T}$, respectively. \
      The four points \ $  \overline{S}, \overline{T},  -\overline{S}, -\overline{T} $ \ form  a parallelogram 
      with centre \ $ Origin $ . \
      We call  \ $ '\nu' $ \ the intersection of the line that connects \ $ \overline{T} $ \ and \ $  -\overline{S}$ \  
       with   $ Axis $ ,    and we claim that   \ $ \nu $ \
       does not depend on the choice of  \ $ ( \widehat{{\tt x}} | \widehat{{\tt y}} )$ \ on $ G $ .  % \\           
           (See  Picture 4) . 
      \end{proposition}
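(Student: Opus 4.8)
The plan is to reduce Proposition~\ref{proposition Euclid3} to Proposition~\ref{proposition D} by an affine change of coordinates, exactly in the spirit of Lemma~\ref{lemma Zwei}. The data of Proposition~\ref{proposition Euclid3} consist of two parallel lines $G,P$, a line $Axis$ with $Axis \neq G$, a point $Origin$ on $Axis \setminus G$, and a number $\varepsilon \geq 0$; everything else ($\widehat{Axis}$, the points $S,T,\overline{S},\overline{T},-\overline{S},-\overline{T}$ and finally $\nu$) is constructed from these. First I would choose an affine isomorphism $\Phi \colon \mathbbm{R}^{2} \to \mathbbm{R}^{2}$ sending $Origin$ to $(0|0)$ and $Axis$ to the horizontal ${\tt x}$-axis. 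Such a $\Phi$ exists because $Origin$ is a point and $Axis$ is a line through it, so we can pick $\Phi$ to be a composition of a translation (moving $Origin$ to the origin) and an invertible linear map (moving the direction vector of $Axis$ to $(1|0)$, and some second independent vector to $(0|1)$).

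Next I would check that $\Phi$ transports the hypotheses of Proposition~\ref{proposition Euclid3} to those of Proposition~\ref{proposition D}. Affine maps send lines to lines and preserve parallelism, so $\Phi(G),\Phi(P)$ are again two parallel lines; $\Phi(G)$ does not meet $(0|0) = \Phi(Origin)$ because $Origin \notin G$; the line $\widehat{Axis}$ through $(\widehat{{\tt x}}|\widehat{{\tt y}})$ parallel to $Axis$ maps to the horizontal line through $\Phi(\widehat{{\tt x}}|\widehat{{\tt y}})$, which is exactly the line used implicitly in Proposition~\ref{proposition D} to locate $S = (\widehat{{\tt x}}-\varepsilon'|\widehat{{\tt y}})$ and $T = (\widehat{{\tt x}}+\varepsilon'|\widehat{{\tt y}})$ for a suitable new parameter $\varepsilon'$. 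The subtle point here is that an affine map need not preserve Euclidean distance, so the two points $S,T$ on $\widehat{Axis}$ at distance $\varepsilon$ from $(\widehat{{\tt x}}|\widehat{{\tt y}})$ do not generally map to points at distance $\varepsilon$ from $\Phi(\widehat{{\tt x}}|\widehat{{\tt y}})$. However, $\Phi$ restricted to the one-dimensional family of lines parallel to $Axis$ \emph{does} scale all distances along those lines by one fixed nonzero factor $\lambda$ (namely the norm of $\Phi$ applied to the unit direction vector of $Axis$); so $\Phi(S),\Phi(T)$ lie on the horizontal line through $\Phi(\widehat{{\tt x}}|\widehat{{\tt y}})$, symmetrically placed about it at horizontal distance $\varepsilon' := \lambda \varepsilon$. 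This is the ``easy transformation of coordinates'' referred to in Lemma~\ref{lemma Zwei}, and I expect it to be the main thing to get right.

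With $S,T$ correctly identified, the remaining constructions match up automatically. ``Projection relatively to $Origin$'' means precisely that $Origin$, the source point and its projection are collinear; since $\Phi$ fixes $Origin$-to-$(0|0)$ and preserves collinearity, $\Phi(\overline{S})$ and $\Phi(\overline{T})$ are the projections of $\Phi(S),\Phi(T)$ onto $\Phi(P)$ through the origin, matching the $\overline{S},\overline{T}$ of Proposition~\ref{proposition D}. The ``reflection through $Origin$'' producing $-\overline{S},-\overline{T}$ is the linear map $v \mapsto -v$ in $Origin$-centred coordinates; $\Phi$ intertwines it with $v \mapsto -v$ in the standard coordinates, so $\Phi(-\overline{S}) = -\Phi(\overline{S})$ and likewise for $T$. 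Finally, $\nu$ is defined as the intersection of the line through $\overline{T}$ and $-\overline{S}$ with $Axis$; applying $\Phi$, the image point $\Phi(\nu)$ is the intersection of the line through $\Phi(\overline{T})$ and $-\Phi(\overline{S})$ with the ${\tt x}$-axis, i.e.\ exactly the $\nu$ of Proposition~\ref{proposition D}.

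Now the conclusion follows by quoting Proposition~\ref{proposition D}. For each choice of $(\widehat{{\tt x}}|\widehat{{\tt y}})$ on $G$, the image $\Phi(\widehat{{\tt x}}|\widehat{{\tt y}})$ ranges over the whole of $\Phi(G)$, and the corresponding $\Phi(\nu)$ is computed by Proposition~\ref{proposition D} to depend only on $\varepsilon'$ and on the intersections of $\Phi(G),\Phi(P)$ with the ${\tt x}$-axis (case {\sf (B)}, where the lines are not vertical) or with the ${\tt y}$-axis (case {\sf (A)}), in any event on data not involving $\widehat{{\tt x}},\widehat{{\tt y}}$. Hence $\Phi(\nu)$ is independent of the choice of point on $\Phi(G)$, and pulling back by $\Phi^{-1}$, the point $\nu$ is independent of the choice of $(\widehat{{\tt x}}|\widehat{{\tt y}})$ on $G$. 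The trivial degenerate cases ($P$ through $Origin$, or $\varepsilon = 0$) are handled exactly as in the proof of Proposition~\ref{proposition D}: they map under $\Phi$ to the degenerate cases treated there, where the parallelogram collapses and $\nu = Origin$. The only genuine obstacle is the distance-scaling bookkeeping in the second paragraph; once that is settled, the rest is a formal transport of structure along $\Phi$.
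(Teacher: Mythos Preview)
Your approach is correct and is exactly the paper's: the paper proves Proposition~\ref{proposition Euclid3} via a one-line lemma stating that ``with an easy transformation of coordinates, we get $(0|0)=Origin$ and {\tt x}-axis $=Axis$'', thereby reducing to Proposition~\ref{proposition D}. The only difference is that you use a general affine map and then have to track the scaling factor $\lambda$ along $Axis$; if instead you take $\Phi$ to be a Euclidean isometry (translation plus rotation), distances are preserved, $\varepsilon'=\varepsilon$, and the ``subtle point'' in your second paragraph disappears entirely.
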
 \quad  
      %%%%%%%%%%%%%%%%%%%%%%%%%%%%%%%%%%%%%    
      %%%%%%%%%%%%%% \input {anglepic11}   
      %%%%%%%%%%%%%%%%%%%%%%%%%%%%%%%%%%%%%%%%%%
      \setlength{\parindent}{0mm}
    \begin{figure}[ht]
    \centering
     \setlength{\unitlength}{0.9cm}
    \begin{picture}(6,6)     \thinlines  
      \put(6.15,-0,1){$ Axis $}   
       \put(-6.0,0){\line(1,0){12.0}}    
       \put(-0.85,0.9){$ \overline{S} $}   \put(-4.0,4.1){$ S $}  \put(+0.85,-0.9){$ -\overline{S} $} 
        \put(-1.88,-2.4){$ \overline{T} $}      \put(1.62,2.15){$ -\overline{T} $}   \put(1.85,1.9){$ \times $}  
         \put(4.0,3.6){$ T $}    \put(2.0,-0.3){$ \nu $} 
        \put(-0.8,4.1){$ ( \widehat{{\tt x}} | \widehat{{\tt y}} )$ }   
        \put(1.6,5.1){$P$}     \put(-0.5,-0.3){ $ Origin $ }
           \put(1.0,6.1){$G$}    %%  \put(4.95,1.1){${\tt y}= \frac{1}{2}  {\tt x}  -1$} 
             \put(-3.5,-3.0){\line(1,2){4.5}}  \put(-2.5,-3.0){\line(1,2){4.0}} 
               \put(-6.0,4.0){\line(1,0){13.0}}     \put(7.1,3.9){$\widehat{Axis}$}
          \put(-5.0,5.0){\line(1,-1){8.0}}  \put(-3.5,-3.5){\line(1,1){9.0}}  \put(-3.0,-2.5){\line(2,1){8.0}} 
       \end{picture}
    \end{figure}	       
            \\ \\ \\   \\   \\  %% \\ \\   \\   \\ \\ \\ \\  \\   \\ \\ \\   \\ \\
      \quad    \hspace*{\fill}       Picture 4    \qquad \qquad       \qquad   \\
        %%  \hspace*{\fill}       Picture 3  \qquad  \qquad   \quad  \\   \\ % \\       
       \begin{lemma}    \rm   \qquad  \ \  
    We have that \quad
     proposition \ref{proposition D} \ \ $\Longrightarrow $ \ \ proposition \ref{proposition Euclid3} .
    \end{lemma}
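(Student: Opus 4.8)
The plan is to imitate the reduction used in Lemma \ref{lemma Zwei}: the general configuration of Proposition \ref{proposition Euclid3} is brought to the coordinate configuration of Proposition \ref{proposition D} by one affine change of coordinates, and then Proposition \ref{proposition D} is quoted directly. Concretely, fix a unit direction vector $u$ of the line $Axis$ and an invertible linear map $M\colon\mathbbm{R}^{2}\to\mathbbm{R}^{2}$ with $M(u)=(1|0)$, and set $\phi(X):=M\,(X-Origin)$ for $X\in\mathbbm{R}^{2}$. Then $\phi$ is an affine bijection with $\phi(Origin)=(0|0)$ and $\phi(Axis)=$ the horizontal {\tt x}-axis (because $Origin\in Axis$ and $M$ carries the direction of $Axis$ to the horizontal direction), i.e.\ exactly the normalisation underlying Proposition \ref{proposition D}. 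The decisive feature is that $\phi$ depends only on $Origin$ and $Axis$, hence is chosen \emph{before}, and independently of, the free point $(\widehat{{\tt x}}|\widehat{{\tt y}})$.

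Next I would check, item by item, that $\phi$ carries every piece of data of Proposition \ref{proposition Euclid3} to the corresponding piece of Proposition \ref{proposition D}. Affine maps preserve parallelism, incidence and collinearity, so $G,P$ go to parallel lines $\phi(G),\phi(P)$ with $(0|0)\notin\phi(G)$ (since $Origin\notin G$); the line $\widehat{Axis}$, being parallel to $Axis$, goes to a horizontal line through $\phi(\widehat{{\tt x}}|\widehat{{\tt y}})$, and that point is off the {\tt x}-axis because $(\widehat{{\tt x}}|\widehat{{\tt y}})\notin Axis$; the two points $S,T$ on $\widehat{Axis}$ at distance $\varepsilon$ from $(\widehat{{\tt x}}|\widehat{{\tt y}})$ go, by the choice $M(u)=(1|0)$, to the two points at signed horizontal displacement $\pm\varepsilon$ from $\phi(\widehat{{\tt x}}|\widehat{{\tt y}})$ — so with the labelling $S=\text{base}-\varepsilon u$, $T=\text{base}+\varepsilon u$ we reproduce the convention $S=(\widehat{{\tt x}}-\varepsilon|\widehat{{\tt y}})$, $T=(\widehat{{\tt x}}+\varepsilon|\widehat{{\tt y}})$ of Proposition \ref{proposition D}; and the non-parallelism hypothesis on the lines $Origin\,S$, $Origin\,T$ becomes exactly the non-parallelism hypothesis of Proposition \ref{proposition D}. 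Moreover, since $\phi$ is affine with $\phi(Origin)=(0|0)$, it maps the reflection of any point $Z$ in $Origin$ to $-\phi(Z)$; hence $\phi$ sends the parallelogram corners $\overline{S},\overline{T},-\overline{S},-\overline{T}$ to the four analogous points of the transformed configuration (the ``projections relative to $Origin$'' being just intersections of lines through $Origin$ with $P$, an affine notion), it sends the line through $\overline{T}$ and $-\overline{S}$ to the line through $\phi(\overline{T})$ and $-\phi(\overline{S})$, and it sends the intersection $\nu$ of that line with $Axis$ to the intersection $\phi(\nu)$ of the transformed line with the {\tt x}-axis.

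With this in hand the argument concludes at once: apply Proposition \ref{proposition D} to the transformed configuration — its case {\sf (A)} or {\sf (B)} according to whether $\phi(G),\phi(P)$ are vertical, and its trivial sub-cases when $\varepsilon=0$ or $\phi(P)$ passes through $(0|0)$. It asserts that $\phi(\nu)$ is one and the same point for every admissible choice of $(\widehat{{\tt x}}|\widehat{{\tt y}})$ on $G$; applying the fixed bijection $\phi^{-1}$ then shows that $\nu$ does not depend on the choice of $(\widehat{{\tt x}}|\widehat{{\tt y}})$, which is the assertion of Proposition \ref{proposition Euclid3}.

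There is no deep obstacle here — as with Lemma \ref{lemma Zwei} the whole content is ``change coordinates'' — but the one step that needs genuine care is the handling of the parameter $\varepsilon$. An affine map is not in general an isometry, so a careless choice of $M$ would move $S,T$ to points at distance $c\,\varepsilon$ (not $\varepsilon$) from the base point; the point to make firmly is that this distortion factor $c$ depends only on $M$ and the direction of $Axis$, hence is the same for \emph{all} admissible $(\widehat{{\tt x}}|\widehat{{\tt y}})$, so one may either normalise it to $1$ (as above, by taking $M(u)=(1|0)$) or simply invoke Proposition \ref{proposition D} with the fixed value $c\,\varepsilon$ in place of $\varepsilon$. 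Finally, as recorded in the Remark after Proposition \ref{proposition D}, interchanging the roles of $S$ and $T$ merely replaces $\nu$ by its reflection in $Origin$, so any residual ambiguity in labelling $\phi(S),\phi(T)$ is irrelevant to the independence statement we are after.
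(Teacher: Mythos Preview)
Your argument is correct and follows the same route as the paper: the paper's proof is the single sentence ``With an easy transformation of coordinates, we get $(0|0)=Origin$, and {\tt x}-axis $=Axis$'', and you have simply spelled this out in full. Your careful treatment of the $\varepsilon$ scaling under a non-isometric affine map is a genuine detail the paper's one-line proof glosses over, but the underlying idea is identical.
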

    \begin{proof}  \quad  \ \
    %% proposition \ref{proposition D}  \ \ $ \Longleftarrow $ \ \ proposition \ref{proposition Euclid3}:  \\
    %% Obvious. The situation in proposition \ref{proposition D} is a special case to the general case 
    %% in { proposition \ref{proposition Euclid3} . }
    %% proposition \ref{proposition D}  \ \ $ \Longrightarrow $ \ \ proposition \ref{proposition Euclid3}:  \\
     With an easy transformation of coordinates, we  get \ $ ( 0|0 ) = Origin $ , \ and \ 
     {\tt x}-axis = $ Axis $ .  
     \end{proof}    
     \begin{remark}    \rm   \quad 
     For all propositions %%   \ref{proposition Euclid2} and \ref{proposition Euclid3} \
     it would be desirable to have a construction with compass and ruler, 
     using the classical methods of the 'old Greeks'. 
     \end{remark}    %.......................................................................................................................%...............................
   % \\    %%    Lot = perpendicular  
     { \bf Acknowledgements } \\ We thank \ Dr. Gencho Skordev, Verena Thürey, Dr. Nils Thürey and specially 
      Prof. Dr. Eberhard  Oeljeklaus for interesting discussions, critical comments and some new ideas.  \\  \\ 
      %%%%%%%%%%%%%%% 
      \centerline{VOLKER  THÜREY  }   
      \centerline{Rheinstr. 91  }   
      \centerline{ 28199 Bremen,  Germany  } 
      \centerline{T: 49 (0)421/591777   } 
      \centerline{ E-Mail: volker@thuerey.de } 
    \quad   \\


\begin{thebibliography}   {99}
   \bibitem{Goetz/Schweizer/Franke/Schoenwald}  W. Götz, W. Schweizer, W. Franke, K. Schönwald,   
     ' Analytische Geometrie ' , Ernst Klett Verlag , Stuttgart 1954  
   \bibitem{Hartshorne}  \rm   Robin Hartshorne , ' Geometry: Euclid and Beyond ' ,  Springer 1997
   \bibitem{Hall/Szabo}  D. W. Hall  , St. Szabo , 
        ' Plane Geometry  ' ,  Prentice-Hall, Englewood Cliffs, New Jersey, 1971
   \bibitem{Kinder/Spengler}  H. Kinder,  U. Spengler, 
     ' Die Bewegungsgruppe einer euklidischen Ebene ' , Teubner 1980   
    \bibitem{Filler}  Andreas Filler, 
     ' Euklidische und nichteuklidische Geometrie ' , BI Wissenschaftsverlag 1993 
    \bibitem{Coxeter}  H.S.M. Coxeter 
     ' Introductions to Geometry ' , Wiley, New York, 1969  
   \bibitem{Roe}  John Roe, 
     ' Elementary  Geometry ' ,  Oxford University Press, 1993
    \bibitem{Backman/Cromie}  G.A. Backman, R.G. Cromie,
     ' Introduction to Concepts of Geometry ' ,  Prentice-Hall, Englewood Cliffs, New Jersey , 1971      
   \end{thebibliography}
	\end{document}